\def\NAT@spacechar{~}
\renewcommand{\baselinestretch}{1.1}
\newcommand{\ceil}[1]{\ensuremath{\protect\lceil#1\rceil}}
\newcommand{\GG}{\ensuremath{\protect{\mathcal{G}}}}
\renewcommand{\ge}{\geqslant}
\renewcommand{\le}{\leqslant}
\renewcommand{\geq}{\geqslant}
\renewcommand{\leq}{\leqslant}
\DeclareMathOperator{\tw}{tw}
\DeclareMathOperator{\ltw}{ltw}
\DeclareMathOperator{\pw}{pw}
\DeclareMathOperator{\lpw}{lpw}
\DeclareMathOperator{\dist}{dist}
\newcommand{\subtree}[1]{\hat{#1}}
\renewcommand{\thefootnote}{\fnsymbol{footnote}}	
\newcommand{\arXiv}[1]{arXiv:\,\href{https://arxiv.org/abs/#1}{#1}}
\newcommand{\msn}[1]{MR:\,\href{https://www.ams.org/mathscinet-getitem?mr=MR#1}{#1}}
\newcommand{\doi}[1]{doi:\,\href{https://dx.doi.org/#1}{#1}}
\theoremstyle{plain}
\newtheorem{theorem}{Theorem}
\newtheorem{lemma}[theorem]{Lemma}
\newtheorem{corollary}[theorem]{Corollary}
\theoremstyle{definition}
\begin{document}

\author{
Vida Dujmovi{\'c}\,\footnotemark[3] 
\qquad David Eppstein\,\footnotemark[2] 
\qquad Gwena\"el  Joret\,\footnotemark[4] \\[1ex]
Pat Morin\,\footnotemark[1] 
\qquad David~R.~Wood\,\footnotemark[5]}

\footnotetext[3]{School of Computer Science and Electrical Engineering, University of Ottawa, Ottawa, Canada (\texttt{vida.dujmovic@uottawa.ca}). Research  supported by NSERC and the Ontario Ministry of Research and Innovation.}

\footnotetext[4]{D\'epartement d'Informatique, Universit\'e Libre de Bruxelles, Brussels, Belgium (\texttt{gjoret@ulb.ac.be}). Supported by an ARC grant from the Wallonia-Brussels Federation of Belgium.}

\footnotetext[2]{Department of Computer Science, University of California, Irvine, California, USA (\texttt{eppstein@uci.edu}). Supported in part by NSF grants  CCF-1618301 and CCF-1616248.}

\footnotetext[1]{School of Computer Science, Carleton University, Ottawa, Canada (\texttt{morin@scs.carleton.ca}). Research  supported by NSERC.}

\footnotetext[5]{School of Mathematics, Monash   University, Melbourne, Australia  (\texttt{david.wood@monash.edu}). Research supported by the Australian Research Council.}

\sloppy

\date{19th October 2018; revised 4th June 2020}

\title{\boldmath\bf Minor-Closed Graph Classes \\with Bounded Layered Pathwidth}
\maketitle


\begin{abstract}
We prove that a minor-closed class of graphs has bounded layered pathwidth if and only if some apex-forest is not in the class. This generalises a theorem of Robertson and Seymour, which says that a minor-closed class of graphs has bounded pathwidth if and only if some forest is not in the class. 
\end{abstract}

\renewcommand{\thefootnote}{\arabic{footnote}}

\section{Introduction}

Pathwidth and treewidth are graph parameters that respectively measure how similar a given graph is to a path or a tree. These parameters are of fundamental importance in structural graph theory, especially in Roberston and Seymour's graph minors series. They also have numerous applications in algorithmic graph theory. Indeed, many NP-complete problems are solvable in polynomial time on graphs of bounded treewidth \citep{FG06}. 

Recently, \citet{DMW17} introduced the notion of layered treewidth. Loosely speaking, a graph has bounded layered treewidth if it has a tree decomposition and a layering such that each bag of the tree decomposition contains a bounded number of vertices in each layer (defined formally below). This definition is interesting since several natural graph classes, such as planar graphs, that have unbounded treewidth have bounded layered treewidth. \citet{BDDEW19} introduced layered pathwidth, which is analogous to layered treewidth where the tree decomposition is required to be a path decomposition. 

The purpose of this paper is to characterise the minor-closed graph classes with bounded layered pathwidth.

\subsection{Definitions}

Before continuing, we define the above notions. A \emph{tree decomposition} of a graph $G$ is a collection $(B_x\subseteq V(G):x\in V(T))$ of subsets of $V(G)$ (called \emph{bags}) indexed by the nodes of a tree $T$, such that:
\begin{enumerate}[(i)]
\item for every edge $uv$ of $G$, some bag $B_x$ contains both $u$ and $v$, and 
\item for every vertex $v$ of $G$, the set $\{x\in V(T):v\in B_x\}$ induces a non-empty connected subtree of $T$.
\end{enumerate}
The \emph{width} of a tree decomposition is the size of the largest bag minus 1. The \emph{treewidth} of a graph $G$, denoted by $\tw(G)$, is the minimum width of a tree decomposition of $G$. 

A \emph{path decomposition} is a tree decomposition in which the underlying tree is a path. We denote a path decomposition by the corresponding sequence of bags $(B_1,\dots,B_n)$. The \emph{pathwidth} of $G$, denoted by $\pw(G)$, is the minimum width of a path decomposition of $G$. 

A graph $H$ is a \emph{minor} of a graph $G$ if a graph isomorphic to $H$ can be obtained from a subgraph of $G$ by contracting edges. A class of graphs \GG\ is \emph{minor-closed} if for every $G\in \GG$, every minor of $G$ is in \GG. 

A \emph{layering} of a graph $G$ is a partition $(V_0,V_1,\dots,V_t)$ of $V(G)$ such that for every edge $vw\in E(G)$, if $v\in V_i$ and $w\in V_j$ then $|i-j|\leq 1$. Each set $V_i$ is called a \emph{layer}.  For example, for a vertex $r$ of a connected graph $G$, if $V_i$ is the set of vertices at distance $i$ from $r$, then $(V_0,V_1,\dots)$ is a layering of $G$, called the \emph{bfs layering} of $G$ starting from $r$. 

\citet{DMW17} introduced the following definition. The \emph{layered width} of a tree decomposition $(B_x:x\in V(T))$ of a graph $G$ is the minimum integer $\ell$ such that, for some layering $(V_0,V_1,\dots,V_t)$ of $G$, each bag $B_x$ contains at most $\ell$ vertices in each layer $V_i$. The \emph{layered treewidth} of a graph $G$, denoted by $\ltw(G)$, is the minimum layered width of a tree decomposition of $G$. \citet{BDDEW19} defined the \emph{layered pathwidth} of a graph $G$, denoted by $\lpw(G)$, to be the minimum layered width of a path decomposition of $G$.

For a graph parameter $f$ (such as treewidth, pathwidth, layered treewidth or layered pathwidth), a graph class $\mathcal{G}$ has \emph{bounded} $f$ is there exists a constant $c$ such that $f(G)\leq c$ for every graph $G\in\mathcal{G}$. 

\subsection{Examples and Applications}

Several interesting graph classes have bounded layered treewidth (despite having unbounded treewidth). For example, \citet{DMW17} proved that every planar graph has layered treewidth at most 3, and more generally that every graph with Euler genus $g$ has layered treewidth at most $2g+3$. Note that layered treewidth and layered pathwidth are not minor-closed parameters (unlike treewidth and pathwidth). In fact, several graph classes that contain arbitrarily large clique minors have bounded layered treewidth or bounded layered pathwidth. For example, \citet{DEW17} proved that every graph that can be drawn on a surface of Euler genus $g$ with at most $k$ crossings per edge has layered treewidth at most $2(2g+3)(k+1)$. Even with $g=0$ and $k=1$, this family includes graphs with arbitrarily large clique minors.  Map graphs have similar behaviour~\citep{DEW17}.

\citet{BDDEW19} identified the following natural graph classes\footnote{A \emph{squaregraph} is a graph that has an embedding in the plane in which each bounded face is a 4-cycle and each vertex either belongs to the unbounded face or has degree at least 4. A \emph{Halin graph} is a planar graph obtained from a tree $T$ with at least four vertices and with no vertices of degree 2 by adding a cycle through the leaves of $T$ in the clockwise order determined by a plane embedding of $T$. For a set $P$ of points in the plane, the \emph{unit disc graph} $G$ of $P$ has vertex set $P$, where $vw \in E(G)$ if and only if $\dist(v, w) \leq 1$.} that have bounded layered pathwidth (despite having unbounded pathwidth): every squaregraph has layered pathwidth 1; every bipartite outerplanar graph has layered pathwidth 1; every outerplanar graph has layered pathwidth at most 2; every Halin graph has layered pathwidth at most 2; and every unit disc graph with clique number $k$ has layered pathwidth at most $4k$. 

Part of the motivation for studying graphs with bounded layered treewidth or pathwidth is that such graphs have several desirable properties. For example, Norin proved that every $n$-vertex graph with layered treewidth $k$ has treewidth less than $2\sqrt{kn}$ (see \citep{DMW17}). This leads to a very simple proof of the Lipton-Tarjan separator theorem. A standard trick leads to an upper bound of $11\sqrt{kn}$ on the pathwidth (see \citep{DEW17}).

Another application is to stack layouts (or book embeddings), queue layouts and track layouts. \citet{DMW17} proved that every $n$-vertex graph  with layered treewidth $k$ has track- and queue-number $O(k\log n)$. This leads to the best known bounds on the track- and queue-number of several natural graph classes\footnote{Subsequent to the submission of this paper, improved bounds on the queue-number have been obtained~\citep{DJMMUW}. Still it is open whether graphs of bounded layered treewidth have bounded queue-number.}. For graphs with bounded layered pathwidth, the dependence on $n$ can be eliminated: \citet{BDDEW19} proved that every graph with layered pathwidth $k$ has track- and queue-number at most $3k$. Similarly, \citet{DMY} proved that every graph with layered pathwidth $k$ has stack-number at most $4k$. 

Graph colouring is another application area for layered treewidth. \citet{EJ14} proved that every graph with maximum degree $\Delta$ and Euler genus $g$ is (improperly) 3-colourable with bounded clustering, which means that each monochromatic component has size bounded by some function of $\Delta$ and $g$. This resolved an old open problem even in the planar case $(g=0)$. The clustering function proved by \citet{EJ14} is roughly $O(\Delta^{32\Delta\,2^g})$. While \citet{EJ14} made no effort to reduce this function, their method will not lead to a  sub-exponential clustering bound. On the other hand, \citet{LW} proved that every graph with layered treewidth $k$ and maximum degree $\Delta$ is 3-colourable with clustering $O(k^{19}\Delta^{37})$, which was recently improved to $O(k^3\Delta^2)$ by \citet{DEMWW}. In particular, every graph with Euler genus $g$ and maximum degree $\Delta$ is 3-colorable with clustering $O(g^3\Delta^2)$. This greatly improves upon the clustering bound of \citet{EJ14}. Moreover, the proofs in \citep{LW,DEMWW} are relatively simple, avoiding many technicalities that arise when dealing with graph embeddings. These results highlight the utility of layered treewidth as a general tool. 

\subsection{Characterisations}

We now turn to the question of characterising those minor-closed classes that have bounded treewidth. The key example is the $n\times n$ grid graph, which has treewidth $n$. Indeed, \citet{RS-V} proved that every graph with sufficiently large treewidth contains the $n\times n$ grid as a minor. The next theorem follows since every planar graph is a minor of some grid graph. Several subsequent works have improved the bounds \citep{RST94,LeafSeymour15,DJGT-JCTB99,Chuzhoy15,CC16,CT19}. 

\begin{theorem}[\citet{RS-V}]
\label{TreewidthCharacterisation}
A minor-closed class has bounded treewidth if and only if some planar graph is not in the class. 
\end{theorem}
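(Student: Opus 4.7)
The plan is to prove both implications, where the nontrivial direction relies on the deep Excluded Grid Theorem of \citet{RS-V}, which is quoted in the paper as the key tool.

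For the (necessity) direction, I would argue by contrapositive: suppose every planar graph belongs to the minor-closed class $\GG$. The $n\times n$ grid graph $G_n$ is planar, so $G_n\in\GG$ for every $n$. Since $\tw(G_n)=n$, this shows $\GG$ has unbounded treewidth. Hence if $\GG$ has bounded treewidth, some planar graph must be excluded.

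For the (sufficiency) direction, suppose $H$ is a planar graph with $H\notin\GG$. I would first show that $H$ is a minor of the $n\times n$ grid $G_n$ for some $n$ depending on $H$: take a plane drawing of $H$, replace each vertex by a small rectangular subgrid and each edge by a path routed along the grid, using a sufficiently fine grid so that all the routings can be performed disjointly (this is a standard VLSI-style embedding argument). Now invoke the Excluded Grid Theorem: there is a function $f$ such that every graph of treewidth at least $f(n)$ contains $G_n$ as a minor. Consequently, every $G\in\GG$ satisfies $\tw(G)<f(n)$, since otherwise $G$ would contain $G_n$, and hence $H$, as a minor, contradicting that $\GG$ is minor-closed and $H\notin\GG$. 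This gives a uniform bound $f(n)-1$ on the treewidth of members of $\GG$.

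The main obstacle is of course the Excluded Grid Theorem itself, which is a substantial structural result; I would cite \citet{RS-V} for this (and note that the improved quantitative bounds from \citep{RST94,LeafSeymour15,DJGT-JCTB99,Chuzhoy15,CC16,CT19} can be plugged in to yield explicit bounds on the treewidth in terms of $|V(H)|$). Apart from this, the only real content is the grid-embedding lemma for planar graphs, which is elementary. The rest of the argument is a short minor-closedness contradiction.
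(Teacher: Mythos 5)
Your proposal is correct and follows exactly the same route the paper sketches: the Excluded Grid Theorem of \citet{RS-V}, together with the facts that the $n\times n$ grid is planar with treewidth $n$ (giving necessity) and that every planar graph is a minor of some grid (giving sufficiency). The paper gives only a one-sentence derivation, and your write-up is simply a fleshed-out version of that same argument.
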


An analogous result for pathwidth holds, where the complete binary tree is the key example (the analogue of grid graphs for treewidth). Let $T_h$ be the complete binary tree of height $h$ (the rooted tree in which each non-leaf vertex has exactly two children, and the distance from the root to each leaf vertex equals $h$). It is well known and easily proved that  $\pw(T_h)=\ceil{\frac{h}{2}}$, and every forest is a minor of some complete binary tree. \citet{RS-I} proved the following characterisation. 

\begin{theorem}[\citet{RS-I}]
\label{PathwidthCharacterisation}
A minor-closed class has bounded pathwidth if and only if some forest is not in the class. 
\end{theorem}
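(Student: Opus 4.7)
The plan is to handle the two directions separately. The forward (``only if'') direction is immediate, while the reverse direction reduces to a single excluded-minor lemma about complete binary trees.

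For the easy direction, suppose $\mathcal{G}$ contains every forest. Then in particular $T_h \in \mathcal{G}$ for every $h$, and since $\pw(T_h) = \lceil h/2 \rceil \to \infty$, the class $\mathcal{G}$ has unbounded pathwidth. Contrapositively, bounded pathwidth forces some forest to be missing from $\mathcal{G}$.

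For the hard direction, suppose some forest $F$ is not in $\mathcal{G}$. A standard argument shows that every forest is a minor of some complete binary tree $T_h$ (for instance, by ``smoothing'' high-degree vertices into paths of degree-$3$ vertices); picking such an $h$ and using that $\mathcal{G}$ is minor-closed, no graph in $\mathcal{G}$ contains $T_h$ as a minor. It therefore suffices to prove the following key lemma: for every $h$ there exists $c_h$ such that every graph with pathwidth greater than $c_h$ contains $T_h$ as a minor. I would prove this by induction on $h$, with the base case $h=0$ being trivial. For the inductive step, given $G$ with $\pw(G)$ sufficiently larger than $c_{h-1}$, the aim is to find two vertex-disjoint connected subgraphs $H_1, H_2 \subseteq G$, each of pathwidth greater than $c_{h-1}$, together with a vertex $r \in V(G) \setminus (V(H_1) \cup V(H_2))$ adjacent to some vertex of $H_1$ and some vertex of $H_2$. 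Applying the induction hypothesis produces $T_{h-1}$-minors in $H_1$ and $H_2$, which combine through $r$ into a $T_h$-minor of $G$ (extend the root branch sets of the two sub-minors along a path to the respective neighbours of $r$, and use $\{r\}$ as the branch set for the root of $T_h$).

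The main obstacle is the splitting step: removing a single bag $B$ from an optimal path decomposition of $G$ need not leave two sides of large pathwidth, since the pathwidth ``mass'' of $G$ might be concentrated in one side or dissipate through $B$. One way around this is to pass to a \emph{linked} path decomposition (in Thomas's sense) and split near its middle, arguing that enough vertex-disjoint linked paths cross both sides to preserve pathwidth. Alternatively, and most cleanly, one can invoke the sharp theorem of Bienstock, Robertson, Seymour and Thomas that every graph of pathwidth at least $|V(F)| - 1$ contains $F$ as a minor; setting $c_h = |V(T_h)| - 1$ then closes the induction and completes the proof of the theorem.
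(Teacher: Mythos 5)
The paper does not give its own proof of this theorem: it states it as a result of Robertson and Seymour and then immediately records the quantitative strengthening of Bienstock, Robertson, Seymour and Thomas, namely that for every forest $T$ with at least two vertices, every graph with no $T$-minor has pathwidth at most $|V(T)|-2$. Your proposal is correct once it reaches that citation, but the route there is redundant and structurally confused. Applied directly to the excluded forest $F$, the BRST theorem already gives the entire hard direction in one step: if $F\notin\mathcal{G}$ and $\mathcal{G}$ is minor-closed, then no $G\in\mathcal{G}$ has an $F$-minor, so $\pw(G)\leq|V(F)|-2$ for every $G\in\mathcal{G}$. There is no induction to ``close''; the BRST result replaces your inductive framework rather than completing it, and the detour through complete binary trees (reducing from $F$ to some $T_h$) contributes nothing to this particular argument, even though that reduction is used elsewhere in the paper for other purposes.

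Your alternative, self-contained route via linked path decompositions is only gestured at. The ``splitting step'' you flag as the main obstacle --- extracting from a graph of large pathwidth two disjoint connected subgraphs each of large pathwidth attached to a common vertex --- is essentially the content of the theorem, and neither the linked-decomposition trick nor the passage to the middle bag is carried out. (In particular, removing a middle bag $B$ guarantees only that \emph{one} of the two sides retains large pathwidth, not both, so an additional idea is needed.) As written, that branch of your argument is not a proof; only the BRST branch is, and it makes the rest of the scaffolding unnecessary.
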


Note that \citet{BRST-JCTB91} proved the following quantitatively stronger result: for every forest $T$ with $|V(T)|\geq 2$ every graph containing no $T$ minor has pathwidth at most $|V(T)|-2$. 

Now consider layered analogues of \cref{TreewidthCharacterisation,PathwidthCharacterisation}. A graph $G$ is \emph{apex} if $G-v$ is planar for some vertex $v$. Define the $n\times n$ \emph{pyramid} to be the apex graph obtained from the $n\times n$ grid by adding one dominant vertex $v$. (Here a vertex is \emph{dominant} if it is adjacent to every other vertex in the graph.)\ The $n\times n$ pyramid has treewidth $n+1$ and layered treewidth at least $\frac{n+2}{3}$, since every layering uses at most three layers. Pyramids are `universal' apex graphs, in the sense that every apex graph is a minor of some pyramid graph (since every planar graph is a minor of some grid graph). \citet{DMW17} proved the following characterisation. 

\begin{theorem}[\citet{DMW17}]
\label{LayeredTreewidthCharacterisation}
A minor-closed class has bounded layered treewidth if and only if some apex graph is not in the class. 
\end{theorem}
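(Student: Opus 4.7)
The plan is to leverage the $n \times n$ pyramid $P_n$ as the central object, exactly as suggested by the discussion preceding the theorem.

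For the necessity direction, I would show that if every apex graph belongs to $\mathcal{G}$ then $\mathcal{G}$ has unbounded layered treewidth. The key observation is that any layering of $P_n$ uses at most three consecutive non-empty layers: the dominant apex vertex $v$ lies in some layer $V_{i_v}$, and since $v$ is adjacent to every other vertex, all remaining vertices must lie in $V_{i_v - 1} \cup V_{i_v} \cup V_{i_v + 1}$. Consequently, in any tree decomposition realising $\ltw(P_n) = \ell$, each bag meets at most three layers and therefore contains at most $3\ell$ vertices. This gives $\tw(P_n) \le 3\,\ltw(P_n) - 1$, which combined with $\tw(P_n) \ge n + 1$ yields $\ltw(P_n) \ge (n+2)/3 \to \infty$.

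For the sufficiency direction, suppose some apex graph $H \notin \mathcal{G}$. Since $H$ is apex, $H - v$ is planar and hence a minor of an $m \times m$ grid for some $m$, so $H$ is a minor of $P_m$. It therefore suffices to prove a uniform upper bound on $\ltw(G)$ over all $H$-minor-free graphs $G$. The main tool here is the Robertson--Seymour graph minor structure theorem specialised to the exclusion of an apex minor: there exist constants $g, k, p, a$ depending on $H$ such that every $H$-minor-free graph admits a tree decomposition of adhesion at most $a$ whose every torso is almost-embeddable in a surface of Euler genus at most $g$, with at most $k$ vortices of pathwidth at most $p$, and \emph{with no apex vertices}. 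The absence of apex vertices in the torsos is exactly the structural consequence of $H$ itself being apex, and is what makes the proof possible.

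From here I would proceed in two steps. First, I would bound the layered treewidth of a single almost-embeddable torso by a function of $g, k, p$: the underlying surface-embedded graph has layered treewidth at most $2g + 3$ via a BFS layering by \citet{DMW17}, and the bounded-pathwidth vortices can be folded into the tree decomposition by enlarging the bags along each vortex boundary by a bounded amount while preserving the BFS layering. Second, I would assemble the per-torso decompositions across the tree decomposition of $G$ using a single global BFS layering of $G$ from an arbitrary root vertex. The most delicate step, which I expect to be the main obstacle, is verifying compatibility between the global BFS layering and the per-torso layerings: because the adhesion is bounded by $a$ and no torso contains apex vertices, BFS distances inside a torso differ from the corresponding global BFS distances by at most $O(a)$, so each per-torso bag meets each global layer in only a bounded number of vertices. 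Combining these enlarged bags along the tree decomposition of $G$ then produces a tree decomposition of $G$ whose layered width with respect to the global BFS layering is bounded by a function of $H$, completing the proof.
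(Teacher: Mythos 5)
This theorem is cited from \citet{DMW17} and is not proved in the present paper, so there is no in-paper proof to compare your proposal against; the only clue the paper gives is the footnote stating that the DMW17 proof ``uses the graph minor structure theorem of Robertson and Seymour'' -- which is indeed the tool you reach for. Your necessity argument (pyramids force unbounded layered treewidth because every layering of a graph with a dominant vertex has at most three non-empty layers) is correct and matches the paper's own remarks preceding the theorem.

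However, the sufficiency direction as you sketch it has a genuine gap. You assert that when $H$ is apex, the graph minor structure theorem can be strengthened so that the almost-embeddable torsos have \emph{no} apex vertices at all. That is not the standard Robertson--Seymour structure theorem, and I do not believe it is true in that bare form; what is known for apex-minor-free classes (via Dvo\v{r}\'ak--Thomas and related work) is a weaker control on the apices, e.g.\ that each apex vertex in a torso attaches only to a bounded-radius region, not that they vanish. Fortunately, eliminating the apices is not necessary for the conclusion: since each torso has at most a bounded number of apex vertices and each vertex of $G$ lies in exactly one layer of the global BFS layering, one can simply add the (boundedly many) apex vertices of a torso to \emph{every} bag of that torso's decomposition, which increases the number of vertices per bag per layer by only a bounded additive amount. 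You should replace the ``no-apex'' claim by this more elementary handling or by a precise citation of the exact structural strengthening you intend to use.

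A second, subtler issue is the one you yourself flag as ``the main obstacle'': the virtual edges introduced in a torso (the adhesion sets turned into cliques) can join vertices that are far apart in the global BFS layering of $G$, so the restriction of the global layering to a torso is generally \emph{not} a layering of that torso. Your sketch waves at this by appealing to bounded adhesion and the absence of torso apices, but that does not by itself reconcile the global and per-torso layerings; this step requires a careful argument (and is, in my view, the real crux of a correct proof), and it is not resolved by the draft as written.
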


\cref{LayeredTreewidthCharacterisation} generalises the above-mentioned result that graphs of bounded Euler genus have bounded layered treewidth. 

A graph $G$ is an \emph{apex-forest} if $G-v$ is a forest for some vertex $v$. 
The following analogue of \cref{LayeredTreewidthCharacterisation} is the main result of this paper. 

\begin{theorem}
\label{LayeredPathwidthCharacterisation}
A minor-closed class has bounded layered pathwidth if and only if some apex-forest is not in the class. 
\end{theorem}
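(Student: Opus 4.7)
I would prove the contrapositive: if the minor-closed class contains every apex-forest, then its layered pathwidth is unbounded. Let $T_h^+$ denote the \emph{apex-tree} obtained from the complete binary tree $T_h$ of height $h$ by adding one dominant vertex $v$. This is an apex-forest, and every apex-forest $H$ is a minor of $T_h^+$ for $h$ large enough: if $H-u$ is a forest $F$, take $h$ so that $F$ is a minor of $T_h$ and let $u$ correspond to $v$. So the class contains every $T_h^+$, and it suffices to show $\lpw(T_h^+) \to \infty$. Because $v$ is dominant, every layering of $T_h^+$ has at most three non-empty layers, so any path decomposition of layered width $\ell$ has bags of size at most $3\ell$; thus $\pw(T_h^+) \le 3\lpw(T_h^+)-1$. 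Combined with $\pw(T_h^+) \ge \pw(T_h) = \lceil h/2 \rceil$, this forces $\lpw(T_h^+) \ge (\lceil h/2\rceil+1)/3 \to \infty$.

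\textbf{Sufficiency.} Let $H$ be an apex-forest excluded from the class. Since $H$ is an apex graph, \cref{LayeredTreewidthCharacterisation} yields a constant $k$ such that every $G$ in the class has a tree decomposition $(B_x : x \in V(T))$ and a layering $(V_0,\ldots,V_t)$ satisfying $|B_x \cap V_i| \le k$ for all $x,i$. The key reduction is an \emph{unrolling step}: if the tree $T$ has pathwidth at most $p$, fix a path decomposition $(P_1,\ldots,P_n)$ of $T$ of width $p$ and set $B'_j := \bigcup_{x \in P_j} B_x$. A routine verification---using that each vertex of $G$ indexes a subtree of $T$, and subtrees map to intervals under path decompositions of $T$---shows this is a path decomposition of $G$; since each $B'_j$ is a union of at most $p+1$ original bags, its intersection with any layer has size at most $(p+1)k$, and hence $\lpw(G) \le (p+1)k$. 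The problem therefore reduces to producing, for each $G$ in the class, a layered-treewidth-$k$ tree decomposition whose underlying tree has pathwidth bounded by a function of $H$.

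By \cref{PathwidthCharacterisation}, this amounts to forbidding some fixed forest as a minor of the decomposition tree. My plan is to proceed by contrapositive: if no such choice is possible, then for arbitrarily large $m$ every candidate decomposition has a tree containing the complete binary tree $T_m$ as a minor. The branches of this $T_m$-minor, via their bags, give vertex-disjoint connected subgraphs in $G$ realising a large forest-minor; one then has to extract from the layering together with the bag structure a single vertex of $G$ that meets every one of these branches, playing the role of $H$'s apex, thereby exhibiting $H$ as a minor of $G$ and contradicting the hypothesis. This \emph{apex-extraction}---marrying the per-layer bag-size constraint with the branching structure of the tree to locate a genuinely universal vertex---is the main obstacle, and is where I expect the bulk of the technical work to lie.
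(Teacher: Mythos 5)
Your necessity argument is correct and matches the paper's reasoning: every apex-forest is a minor of some $Q_h$ (which the paper also calls $Q_k$), and the dominant vertex forces every layering of $Q_h$ to use at most three layers, so $\lpw(Q_h) \geq (\pw(Q_h)+1)/3 \geq (\lceil h/2 \rceil + 1)/3 \to \infty$. Your unrolling step is also correct and is essentially the paper's \cref{BlowUp}, extended to track layers.

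The sufficiency direction, however, has a genuine gap, and I think it is more fundamental than you anticipate. First, your proposed intermediate target --- a layered-width-$k$ tree decomposition whose underlying tree has bounded pathwidth --- is not a reduction at all: it is \emph{equivalent} to bounded layered pathwidth. (Any path decomposition already has a path, pathwidth $1$, as its tree; and conversely your unrolling gives $\lpw(G) \leq (p+1)k$.) So ``reduce to producing such a decomposition'' simply restates the goal. Second, and more seriously, the claim that ``the branches of this $T_m$-minor, via their bags, give vertex-disjoint connected subgraphs in $G$ realising a large forest-minor'' is false for general tree decompositions. Nothing in the definition prevents all bags from being identical (or from overlapping heavily along many tree paths), in which case the decomposition tree can contain an enormous $T_m$-minor while $G$ itself is a triangle. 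The tree indexing a tree decomposition is a purely organisational object; its minors do not transfer to minors of $G$. Your ``apex-extraction'' step, which you flag as the main obstacle, cannot be carried out because the disjoint branches you need do not exist in the first place.

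The paper sidesteps exactly this problem with two devices you would need some version of. (i) It replaces ``the tree $T$ has bounded pathwidth'' by the weaker, per-vertex condition that each subtree $T[v]$ (nodes whose bag contains $v$) has bounded pathwidth, calling this a $(w,p)$-good decomposition --- condition (5) of \cref{MainThm}. Proving $(5)\Rightarrow(4)$ is then nontrivial (it uses a bfs layering, chordality and parent cliques, not the simple unrolling), but this weaker condition is actually attainable for all $Q_k$-minor-free graphs, whereas bounded tree-pathwidth is not a useful stepping stone. (ii) To prove $(1)\Rightarrow(5)$ it works not with an arbitrary tree decomposition but with an SPQR-tree $S$, whose pieces are joined at explicit $2$-vertex cutsets; this is what makes the geometry controllable. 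The critical apex vertex $v$ is identified in advance as one with $\pw(T[v])$ large, and then (\cref{roof-lifting}, \cref{qk-minor}) a large binary-tree subdivision inside $S[v]$ is converted, using Dang's theorem for the $3$-connected pieces and the cutset structure for the rest, into a $Q_k$-minor of $G$ with $v$ as the dominant vertex. Without a structural decomposition of this kind --- one in which tree-adjacency is tied to small separators of $G$ --- there is no way to pull a forest minor out of a decomposition tree, and the plan stalls at the very step you identify as the ``main obstacle.''
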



\citet{DMW17} noted that \cref{LayeredTreewidthCharacterisation} implies \cref{TreewidthCharacterisation}. Similarly, we now show\footnote{While \cref{LayeredPathwidthCharacterisation} implies \cref{PathwidthCharacterisation}, we emphasise that our proof of \cref{LayeredPathwidthCharacterisation} relies on the above-mentioned quantitative version of \cref{PathwidthCharacterisation} by \citet{BRST-JCTB91}. Similarly, the proof of 
\cref{LayeredTreewidthCharacterisation} uses the graph minor structure theorem of \citet{RS-XVI} and thus relies on 
\cref{TreewidthCharacterisation}.} that \cref{LayeredPathwidthCharacterisation} implies \cref{PathwidthCharacterisation}. Let $T$ be a forest, and let $G$ be a graph with no $T$ minor. Let $T^+$ be the apex-forest obtained from $T$ by adding a dominant vertex $v$. Let $G^+$ be the graph obtained from $G$ by adding a dominant vertex $x$. Suppose for the sake of contradiction that $G^+$ contains a $T^+$-minor.  A $T^+$-minor in $G^+$ can be described by a mapping from the vertices of $T^+$ to vertex-disjoint trees in $G^+$ such that whenever two vertices in $T^+$ are adjacent, the corresponding two trees induce a connected subgraph of $G$. From this mapping, remove two (not necessarily distinct) trees, the image of $v$ and the tree (if it exists) that contains $x$. If the tree that contains $x$ was the image of a vertex $w$ in $T$, then instead map $w$ to the tree that was the image of $v$. The resulting mapping describes a $T$-minor in $G$, as claimed. This contradiction shows that $G^+$ is $T^+$-minor-free. By \cref{LayeredPathwidthCharacterisation}, $G^+$ has layered pathwidth at most $c=c(T^+)$. Since $G^+$ has radius 1, at most three layers are used. Thus $G^+$ and $G$ have pathwidth less than $3c$.


Layered treewidth is closely related to the notion of `local treewidth', which was first introduced by \citet{Eppstein-Algo00} under the guise of the `treewidth-diameter' property. A graph class $\mathcal{G}$ has \emph{bounded local treewidth} if there is a function $f$ such that for every graph $G$ in $\mathcal{G}$, for every vertex $v$ of $G$ and for every integer $r\geq0$, the subgraph of $G$ induced by the vertices at distance at most $r$ from $v$ has treewidth at most $f(r)$. If $f(r)$ is a linear function, then  $\mathcal{G}$ has \emph{linear local treewidth}. See \citep{Grohe-Comb03,DH-SJDM04,DH-SODA04,Eppstein-Algo00,FG06} for results and algorithmic applications of local treewidth.  \citet{DMW17} observed that if some class $\mathcal{G}$ has bounded layered treewidth, then $\mathcal{G}$ has linear local treewidth. On the other hand, bounded layered treewidth is a stronger property than bounded or linear local treewidth.

Local pathwidth is defined similarly to local treewidth. A graph class $\mathcal{G}$ has \emph{bounded local pathwidth} if there is a function $f$ such that for every graph $G$ in $\mathcal{G}$, for every vertex $v$ of $G$ and for every integer $r\geq0$, the subgraph of $G$ induced by the vertices at distance at most $r$ from $v$ has pathwidth at most $f(r)$. The observation of \citet{DMW17} extends to the setting of local pathwidth; see \cref{lpw-llp} below. 

\cref{LayeredPathwidthCharacterisation} is extended to capture local pathwidth by the following theorem, which also provides a structural description in terms of a tree decomposition with certain properties that we now introduce. If $T$ is a tree indexing a tree decomposition of a graph $G$, then for each vertex $v$ of $G$, let $T[v]$ denote the subtree of $T$ induced by those nodes corresponding to bags that contain $v$. Thus $T[v]$ is non-empty and connected. Say that a tree decomposition of a graph $G$ is \emph{$(w,p)$-good} if its width is at most $w$ and, for every $v\in V(G)$, the subtree $T[v]$ has pathwidth at most $p$. We illustrate this definition with two examples. Let $T$ be a tree, rooted at some vertex. For each node $x$ of $T$, introduce a bag $B_x$ consisting of $x$ and its parent node (or just $x$ if $x$ is the root). Then $(B_x:x\in V(T))$ is a tree decomposition of $T$ with width 1. Moreover, for each vertex $v$, the subtree $T[v]$ is a star, which has pathwidth 1. Thus every tree has a $(1,1)$-good tree decomposition. Now, consider an outerplanar triangulation $G$. Let $T$ be the weak dual tree (ignoring the outerface). For each node $x$ of $T$, let $B_x$ be the set of three vertices on the face corresponding to $x$. Then $(B_x:x\in V(G))$ is a tree decomposition of $G$ with width 2. Moreover, for each vertex $v$ of $G$, the subtree $T[v]$ is a path, which has pathwidth 1.  Thus every outerplanar graph has a $(2,1)$-good tree decomposition (since every outerplanar graph is a subgraph of an outerplanar triangulation). These constructions are generalised via the following theorem, which immediately implies \cref{LayeredPathwidthCharacterisation}. 

\begin{theorem}
\label{MainThm}
The following are equivalent for a minor-closed class \GG:
\begin{enumerate}[(1)]
\item some apex-forest graph is not in \GG, 
\item \GG\ has bounded local pathwidth, 
\item \GG\ has linear local pathwidth. 
\item \GG\ has bounded layered pathwidth, 
\item there exist integers $w$ and $p$, such that every graph in \GG\ has a $(w,p)$-good tree decomposition. 
\end{enumerate}
\end{theorem}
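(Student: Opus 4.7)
The plan is to establish the five-way equivalence along the cyclic chain $(1)\Rightarrow(5)\Rightarrow(4)\Rightarrow(3)\Rightarrow(2)\Rightarrow(1)$. The implications $(2)\Rightarrow(1)$, $(3)\Rightarrow(2)$, and $(4)\Rightarrow(3)$ are standard. For $(2)\Rightarrow(1)$, I argue the contrapositive: if \GG\ contains every apex-forest, then \GG\ contains the graph $T_h^+$ obtained from the complete binary tree $T_h$ by adding a dominant vertex; this graph has radius $1$ and pathwidth at least $\ceil{h/2}$, so the local pathwidth of \GG\ is unbounded already at radius $1$. Implication $(3)\Rightarrow(2)$ is trivial, and $(4)\Rightarrow(3)$ follows since a ball of radius $r$ meets at most $2r+1$ consecutive layers of any witnessing layering of layered pathwidth $\ell$, giving an induced path decomposition of width at most $\ell(2r+1)-1$, linear in $r$.

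For $(5)\Rightarrow(4)$, given a $(w,p)$-good tree decomposition $(B_x:x\in V(T))$ of $G$, the plan is to linearise $T$ by a depth-first traversal into a path decomposition of $G$, and to layer $G$ by combining the depth in $T$ with the individual path decompositions of the subtrees $T[v]$. The bounded pathwidth of each $T[v]$ is exactly what is needed to bound the layered width of the resulting path decomposition by a function of $w$ and $p$ alone. This is a routine, if somewhat delicate, tree-decomposition manipulation.

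The main content is $(1)\Rightarrow(5)$. Write the excluded apex-forest as $H = J + v$ with $J$ a forest. Since $H$ is an apex graph, \cref{LayeredTreewidthCharacterisation} equips every $G \in \GG$ with a tree decomposition $(B_x:x\in V(T_0))$ of bounded width together with a layering $(V_0,V_1,\dots)$ whose bag-layer intersections are bounded. I plan to refine $T_0$ into a new tree $T$ indexing a $(w,p)$-good tree decomposition. The key quantitative input is the theorem of \citet{BRST-JCTB91}: any $J$-minor-free graph has pathwidth at most $|V(J)|-2$. I would apply this bound to an auxiliary graph associated with each ``layer-slice'' of $G$, where a neighbouring layer effectively plays the role of the apex vertex, forcing the remainder to exclude $J$ as a minor. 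Stitching the resulting path decompositions together along $T_0$ produces the refined tree $T$; since each vertex $v\in V(G)$ lies in a single layer, the subtree $T[v]$ is contained in the local path decomposition of that layer and thus inherits bounded pathwidth. The main obstacle is making the ``layer-as-apex'' reduction rigorous and controlling the bag size of the combined decomposition so that the width $w$ remains bounded throughout the refinement.
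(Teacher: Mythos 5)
Your implications $(2)\Rightarrow(1)$, $(3)\Rightarrow(2)$, and $(4)\Rightarrow(3)$ are correct and match the paper's arguments. However, both remaining implications have real problems.

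For $(5)\Rightarrow(4)$: your plan — DFS-traversal of $T$ to linearise, then layer by "combining the depth in $T$ with the individual path decompositions of the subtrees $T[v]$" — does not obviously give a valid layering. A vertex $v$ occurs at many depths of $T$ (since $T[v]$ is a subtree), and even if two adjacent vertices $v,w$ share a bag, their minimum depths can differ arbitrarily, so a depth-based rule need not produce a layering at all; nor does bounded pathwidth of $T[v]$ imply bounded diameter, which would be needed to control this. The paper's proof is quite different: it first makes $G$ chordal by filling in bags, takes a \emph{BFS layering of $G$} (not of $T$), and uses the fact that for each component $H$ of a layer $G[V_i]$, the parent set $C_H$ in $V_{i-1}$ is a clique of size $\le w$ (chordality), so $T_H:=\bigcup_{u\in C_H}T[u]$ is a connected subtree of pathwidth $\le w(p+1)-1$ indexing a width-$w$ tree decomposition of $G[V(H)\cup C_H]$; it then builds a layered path decomposition inductively layer by layer. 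Without some comparable mechanism, "routine, if somewhat delicate" is not enough here.

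For $(1)\Rightarrow(5)$, the gap is more serious. Starting from the layered-treewidth decomposition of \cref{LayeredTreewidthCharacterisation} and refining it by applying \citet{BRST-JCTB91} to layer-slices is a reasonable first idea, and the layer-slice bound is actually provable (contract all earlier layers to a single vertex to make a $Q_k$ from a $T_k$ in $G[V_i]$). But the final step — "since each vertex $v\in V(G)$ lies in a single layer, the subtree $T[v]$ is contained in the local path decomposition of that layer and thus inherits bounded pathwidth" — is false as stated: $T[v]$ must include every bag containing $v$, and those bags must also cover $v$'s edges to the two neighbouring layers, so $T[v]$ necessarily spills into the decompositions of adjacent layers. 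You have no control over how the per-layer path decompositions are glued into a tree, so there is no reason the union has bounded pathwidth. The paper avoids this difficulty entirely by going through a structural decomposition of $G$ that respects connectivity: it reduces to 2-connected $G$ via \cref{cut-vertices}, builds $T$ from path decompositions of the S- and R-nodes of an SPQR-tree $S$ of $G$ (with 3-connected pieces handled by \citet{Dang18} via \cref{3-connected}), and then proves directly (\cref{roof-lifting}, \cref{qk-minor}) that if some $T[v]$ had large pathwidth, then $S[v]$ would contain a large complete binary tree subdivision, which in turn yields a $Q_k$-minor in $G$ with $v$ as apex. That connectivity-aware construction is what makes the $T[v]$'s behave; your construction has no analogue of it.
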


Here is some intuition about property (5). Suppose that \GG\ excludes some apex-forest graph as a minor. Since every apex-forest graph is planar, by \cref{TreewidthCharacterisation}, the graphs in \GG\ have bounded treewidth. Thus we should expect that the tree decompositions in (5) have bounded width. Moreover, if \GG\ has bounded layered pathwidth, then $G[N(v)]$ has bounded pathwidth for each vertex $v$ in each graph $G\in\GG$. Property (5) takes this idea further, and says that each subtree $T[v]$ has bounded pathwidth, which implies that $G[N(v)]$ has bounded pathwidth (since the width of the tree decomposition is bounded; see \cref{BlowUp}). 

In \cref{easy-section} we prove $(5)\implies (4)\implies(3)\implies(2)\implies(1)$. In \cref{hard-section} we close the loop by proving $(1)\implies (5)$. This proof uses a recent characterisation by \citet{Dang18}  of the unavoidable minors in 3-connected graphs of large pathwidth. 

Throughout the proof we use the following `universal' apex-forest graph. Let $Q_k$ be the graph obtained from the complete binary tree $T_k$ by adding one dominant vertex. Note that $\pw(Q_k)=\ceil{\frac{k}{2}}+1$ and the layered pathwidth of $Q_k$ is at least $\frac{k+4}{6}$, since every layering of $Q_k$ uses at most three layers. Since every forest is a minor of some complete binary tree, every apex-forest graph is a minor of some $Q_k$. 

\section{Downward Implications}
\label{easy-section}

We start with a few simple but useful lemmas.

\begin{lemma}
\label{BlowUp}
If a graph $G$ has a tree decomposition of width $k$ indexed by a tree of pathwidth $p$, then $G$ has pathwidth at most $(p+1)(k+1)-1$.
\end{lemma}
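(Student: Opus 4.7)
The plan is to combine the given tree decomposition of $G$ with a path decomposition of the indexing tree $T$ in the most natural way: index bags by the path decomposition of $T$ and, in the bag corresponding to $P_i$, dump in all the $G$-bags at the tree-nodes of $P_i$.

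Formally, let $(B_x : x \in V(T))$ be the given tree decomposition of $G$, with $|B_x| \le k+1$ for every $x$, and let $(P_1,\dots,P_n)$ be a path decomposition of $T$ with $|P_i| \le p+1$ for every $i$. For each $i \in \{1,\dots,n\}$, define
\[
 C_i \;=\; \bigcup_{x \in P_i} B_x.
\]
Then $|C_i| \le (p+1)(k+1)$, which will give the claimed width bound once $(C_1,\dots,C_n)$ is shown to be a path decomposition of $G$.

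Checking the edge condition is immediate: an edge $uv$ of $G$ lies in some $B_x$ (by the tree decomposition of $G$), and $x$ lies in some $P_i$ (by the path decomposition of $T$), so $u,v \in C_i$. The real content is the interval condition on vertices: for each $v \in V(G)$, the set $I_v := \{\, i : v \in C_i \,\}$ must be an interval. By construction, $I_v = \{\, i : P_i \cap T[v] \neq \emptyset \,\}$, where $T[v]$ is the subtree of $T$ consisting of nodes whose $B$-bag contains $v$; this subtree is connected by property (ii) of the tree decomposition.

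So the key lemma I need is the following standard fact about path decompositions: if $(P_1,\dots,P_n)$ is a path decomposition of a tree $T$ and $S$ is a connected subgraph of $T$, then $\{\, i : P_i \cap V(S) \neq \emptyset \,\}$ is an interval of $\{1,\dots,n\}$. I would prove this by induction on $|V(S)|$: the base case $|V(S)|=1$ is exactly the interval property of the path decomposition; for the inductive step, write $S = S' \cup \{z\}$ where $z$ has a neighbour $z'$ in the connected subgraph $S'$, note that the supports of $z$ and $z'$ overlap (since the edge $zz'$ is covered by some $P_i$), and conclude that the support of $S$ is the union of two overlapping intervals, hence itself an interval. Applying this to $S = T[v]$ gives property (ii) for $(C_1,\dots,C_n)$, completing the proof.

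There is no real obstacle here; the only point to be careful about is resisting the temptation to argue the interval property pointwise, which does not work directly (two nodes of $T[v]$ being in $P_i$ and $P_j$ does not by itself force a node of $T[v]$ into every intermediate $P_m$). The connected-subgraph lemma is exactly what bridges that gap.
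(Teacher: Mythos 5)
Your proposal is correct and uses exactly the same construction as the paper's proof: for each bag $C_i$ of the path decomposition of $T$, form $\bigcup_{x\in C_i}B_x$. The only difference is that you spell out the verification of the interval condition via the connected-subgraph lemma, whereas the paper leaves this routine check implicit.
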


\begin{proof}
Let $(B_x:x\in V(T))$ be a tree decomposition of $G$ of width $k$. Let $(C_1,\dots,C_n)$ be a path decomposition of $T$ of width $p$. For $i\in\{1,\dots,n\}$, let $D_i:= \bigcup_{x\in C_i}B_x$. Then $(D_1,\dots,D_n)$ is a path decomposition of $G$ of width $(p+1)(k+1)-1$ (since $|C_i|\leq p+1$ and $|B_x|\leq k+1$). 
\end{proof}

\begin{lemma}
Let $T_1$ and $T_2$ be subtrees of a tree $T$, such that $T=T_1\cup T_2$. Then 
$$\pw(T)+1\leq(\pw(T_1)+1)+(\pw(T_2)+1).$$
\end{lemma}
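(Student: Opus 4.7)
The plan is to construct an explicit path decomposition of $T$ by \emph{splicing} a path decomposition of $T_2$ into one of $T_1$. Two structural observations set up the construction. First, since $T=T_1\cup T_2$ is connected, $T_0:=T_1\cap T_2$ is a nonempty subtree. Second, in the tree $T_2$ each connected component $C$ of $T_2-V(T_0)$ is attached to $V(T_0)$ by a single edge of $T_2$ (otherwise $T_2$ would contain a cycle); write $v_C\in V(T_0)$ for its $V(T_0)$-endpoint.

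I would fix an optimal path decomposition $(A_1,\dots,A_m)$ of $T_1$ of width $p_1:=\pw(T_1)$, and for each component $C$ a path decomposition $(D^C_1,\dots,D^C_{k_C})$ of the subtree $C\cup\{v_C\}$ of $T_2$ of width at most $p_2:=\pw(T_2)$ (which exists because pathwidth is subgraph-monotone). For each $C$, let $a_C$ be the first index with $v_C\in A_{a_C}$. The new decomposition is obtained by inserting, for each $C$ in order of nondecreasing $a_C$, the bags $A_{a_C}\cup D^C_1,\dots,A_{a_C}\cup D^C_{k_C}$ into the sequence $(A_1,\dots,A_m)$ right after $A_{a_C}$. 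Each spliced bag has size at most $(p_1+1)+(p_2+1)=p_1+p_2+2$, giving the desired width bound $\pw(T)\leq p_1+p_2+1$.

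Verification has two parts. Edge coverage is immediate: edges of $T_1$ already lie in some $A_j$, while edges of $T_2$ not in $T_1$ (all of which lie inside some subtree $C\cup\{v_C\}$) lie in the spliced bags. For contiguity of the bags containing each vertex, vertices of $V(T_1)\setminus V(T_2)$ see no change, and vertices of $V(T_2)\setminus V(T_1)$ appear only within a single insertion block (for the unique $C$ that contains them). The delicate case is $u\in V(T_0)$: a spliced bag $A_{a_C}\cup D^C_l$ contains $u$ exactly when $u\in A_{a_C}$ or when $u=v_C$, and the latter forces $a_C$ to equal the first index at which $u$ appears in $(A_j)$. From this one reads off that the bags containing $u$ in the new sequence are precisely the original $A_j$'s containing $u$ together with every insertion that got sandwiched among them, which is one contiguous block.

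The main obstacle I anticipate is this contiguity check for $V(T_0)$, in particular when several components $C,C'$ share an attachment vertex $v_C=v_{C'}$ and their insertion blocks pile up after the same $A_j$. However, since every spliced bag at such a position contains $A_j\supseteq\{v_C\}$, any linear order of the insertions preserves contiguity of the shared vertex, so the bookkeeping can be made to work cleanly.
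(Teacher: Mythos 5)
Your proposal is correct and takes essentially the same splicing approach as the paper: both decompose the part of $T_2$ outside $T_1$ into components, take a width-$\pw(T_2)$ path decomposition of each, and insert it at the bag of $T_1$'s decomposition where the attachment vertex first appears, unioning every inserted bag with that anchor bag to preserve connectivity. The only cosmetic differences are that you splice in decompositions of $C\cup\{v_C\}$ rather than of $C$ alone, and you handle several components sharing an attachment bag by arguing directly (correctly) that any ordering of the inserted blocks preserves contiguity, whereas the paper sidesteps that case by first duplicating bags so each component attaches at its own copy.
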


\begin{proof}
Let $(B_1,\dots,B_s)$ be a path decomposition of $T_1$ with bag size at most $\pw(T_1)+1$. Each component  of $T-V(T_1)$ is contained in $T_2$ and therefore has a path decomposition  with bag size at most $\pw(T_2)+1$. For each such component $J$ of $T-V(T_1)$, there is exactly one vertex $v$ in $T_1$ adjacent to some vertex in $J$ (otherwise $T$ would contain a cycle consisting of two edges between a path in $T_1$ and a path in $J$). Say $v$ is in bag $B_i$. We say $J$ \emph{attaches} at $v$ and at $B_i$. By doubling bags in the path decomposition of $T_1$, we may assume that distinct components of $T-V(T_1)$ attach at distinct $B_i$. For each component $J$ of $T-V(T_1)$, if $(D_1,\dots,D_t)$ is a path decomposition of $J$ with bag size at most $\pw(T_2)+1$, then replace $B_i$ by $(B_i\cup D_1,\dots,B_i\cup D_t)$. We obtain a path decomposition of $T$ with bag size at most $(\pw(T_1)+1)+(\pw(T_2)+1)$. The result follows. 
\end{proof}

\begin{corollary}
\label{SubtreeUnion}
Let $T_1,\dots,T_k$ be subtrees of a tree $T$, such that $T=T_1\cup\dots\cup T_k$. Then 
\begin{equation*}
\pw(T)+1\leq\sum_{i=1}^k(\pw(T_i)+1).
\end{equation*} 
\end{corollary}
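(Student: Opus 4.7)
The plan is to reduce this to the preceding two-subtree lemma by induction on $k$, carefully ordering the subtrees so that each partial union remains a subtree (and in particular remains connected), at which point the lemma applies.

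The base case $k=1$ is immediate. For the inductive step, the key preliminary observation is that we can reorder the $T_i$'s so that for each $i\ge 2$ the subtree $T_i$ meets the union $T_1\cup\cdots\cup T_{i-1}$. This follows from a standard connectivity argument: form the ``intersection graph'' $H$ on $\{1,\ldots,k\}$ with an edge between $i$ and $j$ whenever $V(T_i)\cap V(T_j)\ne\emptyset$. Since $T=T_1\cup\cdots\cup T_k$ is connected, $H$ must be connected (otherwise the subtrees in one component of $H$ would be vertex-disjoint from those in another, contradicting connectivity of $T$). A BFS ordering of $H$ starting from any node then yields the required reordering.

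With this ordering fixed, I would set $S_i:=T_1\cup\cdots\cup T_i$ and argue inductively that each $S_i$ is a subtree of $T$: $S_1=T_1$ is a subtree, and if $S_{i-1}$ is a subtree then $S_i=S_{i-1}\cup T_i$ is the union of two subtrees sharing at least one vertex, hence connected, hence itself a subtree. The preceding lemma then gives
\[
\pw(S_i)+1\le(\pw(S_{i-1})+1)+(\pw(T_i)+1),
\]
and an easy induction on $i$ yields $\pw(S_i)+1\le\sum_{j=1}^i(\pw(T_j)+1)$. Taking $i=k$ gives the desired bound, since $S_k=T$.

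There is no real obstacle here; the only subtlety worth flagging is that one cannot blindly peel off subtrees in an arbitrary order, because $T_1\cup\cdots\cup T_{k-1}$ need not be connected in general. The reordering via the intersection graph handles exactly this issue, after which the corollary is a mechanical iteration of the two-subtree lemma.
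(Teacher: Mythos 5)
Your proof is correct, and it follows the approach the paper clearly intends: iterate the two-subtree lemma by induction on $k$. The paper states \cref{SubtreeUnion} without proof, leaving the iteration implicit, and you have correctly spotted the one non-trivial point that the bare statement glosses over: the two-subtree lemma requires $T_1$ to be a subtree (its proof argues that each component $J$ of $T-V(T_1)$ attaches to $T_1$ at a \emph{unique} vertex, which uses connectivity of $T_1$), so one cannot peel off the $T_i$ in arbitrary order. Your fix --- showing the intersection graph $H$ on $\{1,\dots,k\}$ is connected (otherwise $T$ would split into two vertex-disjoint, edge-disjoint parts) and then taking a BFS order of $H$ so that each $S_i=T_1\cup\cdots\cup T_i$ stays connected, hence a subtree --- is exactly the right repair, and the rest is a mechanical telescoping of the two-subtree bound. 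This is the argument the authors presumably had in mind, made explicit.
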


We now prove the downward implications in \cref{MainThm}. First note that (2) implies (1), since if every graph in \GG\ has local pathwidth at most $k$, then the apex-forest graph $Q_{6k}$ is not in \GG. It is immediate that (3) implies (2). That (4) implies (3) is the above-mentioned observation of \citet{DMW17} specialised for pathwidth. We include the proof for completeness. 

\begin{lemma}
\label{lpw-llp}
Let \GG\ be a class of graphs such that every graph in \GG\ has layered pathwidth at most $k$. Then \GG\ has linear local pathwidth with binding function $f(r) =(2r+1)k-1$.  
\end{lemma}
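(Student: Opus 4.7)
The plan is to leverage the defining property of layerings, namely that an edge joins only vertices in the same or adjacent layers, to control how many layers a ball of radius $r$ can span. Fix $G\in\GG$ together with a path decomposition $(B_1,\dots,B_n)$ and layering $(V_0,\dots,V_t)$ witnessing $\lpw(G)\leq k$, i.e.\ $|B_j\cap V_i|\leq k$ for all $i,j$. Fix $v\in V(G)$ and $r\geq 0$, and let $H$ be the subgraph of $G$ induced by the vertices at distance at most $r$ from $v$.

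First I would observe that if $v\in V_s$, then by induction on $d$ every vertex at distance exactly $d$ from $v$ lies in $V_{s-d}\cup V_{s-d+1}\cup\cdots\cup V_{s+d}$: each step of a shortest path changes the layer index by at most $1$ by the definition of a layering. Hence
\[
V(H)\;\subseteq\; V_{s-r}\cup V_{s-r+1}\cup\cdots\cup V_{s+r},
\]
a union of at most $2r+1$ layers.

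Next I would restrict the path decomposition to $H$. Define $B'_j:=B_j\cap V(H)$ for $j\in\{1,\dots,n\}$. Standard properties of (path) decompositions are preserved under restriction to an induced subgraph, so $(B'_1,\dots,B'_n)$ is a path decomposition of $H$. For each $j$, the bag $B'_j$ meets at most $2r+1$ layers (the ones listed above), and in each of these layers it contains at most $k$ vertices. Therefore $|B'_j|\leq (2r+1)k$, so the width of this path decomposition is at most $(2r+1)k-1$, giving $\pw(H)\leq (2r+1)k-1=f(r)$.

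There is essentially no substantial obstacle here; the only mild care needed is in the inductive argument that a ball of radius $r$ occupies at most $2r+1$ consecutive layers, which falls straight out of the definition of a layering. The bound $f(r)=(2r+1)k-1$ is linear in $r$, so this also establishes that $\GG$ has linear local pathwidth, completing the implication $(4)\Rightarrow(3)$ in \cref{MainThm}.
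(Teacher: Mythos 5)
Your proof is correct and follows essentially the same approach as the paper: observe that the ball of radius $r$ around $v$ is contained in at most $2r+1$ consecutive layers, then restrict the given path decomposition to this ball and bound the bag sizes by $(2r+1)k$. The only difference is that you spell out the easy induction showing the ball spans few layers, which the paper states without proof.
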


\begin{proof}
For a graph $G\in\GG$, let $(B_1,\dots,B_s)$ be a path decomposition of $G$ with layered width $k$, with respect to some layering $(V_0,V_1,\dots,V_t)$. Let $v$ be a vertex in $V_i$. Let $r$ be a positive integer. Let $H$ be the subgraph of $G$ induced by the vertices at distance at most $r$ from $v$. Thus $V(H)\subseteq V_{i-r}\cup V_{i-r+1}\cup\dots\cup V_{i+r}$. Each bag $B_j$ contains at most $k$ vertices in each layer. Hence $(B_1\cap V(H),\dots,B_s\cap V(H))$ is a path decomposition of $H$ with at most $(2r+1)k$ vertices in each bag. Therefore \GG\ has linear local pathwidth with binding function $f(r) =(2r+1)k-1$.  
\end{proof}

The next lemma shows that (5) implies (4). 

\begin{lemma}
If a graph $G$ has a $(w,p)$-good tree decomposition, then 
$$\lpw(G)\leq w(p+1)(w+1).$$
\end{lemma}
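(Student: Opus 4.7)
Let $(B_x:x\in V(T))$ be the given $(w,p)$-good tree decomposition. Root $T$ at an arbitrary vertex $r$, let $d$ denote depth in $T$, and for each $v\in V(G)$ let $\rho(v)$ denote the unique node of $T[v]$ closest to $r$ (which exists because $T[v]$ is a nonempty connected subtree). My plan is to construct the path decomposition of $G$ from a depth-first linearization of $T$ and the layering of $G$ from BFS distances in a suitable incidence graph.

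For the path decomposition of $G$, fix a DFS preorder $x_1,\dots,x_N$ of $V(T)$ and let $C_k\subseteq V(T)$ be the DFS stack at step $k$ (the chain of ancestors of $x_k$ in $T$). Each node of $T$ appears in $C_k$ for a contiguous range of $k$'s, so $(C_k)$ is a valid path decomposition of $T$. Lifting to $G$ by $D_k:=\bigcup_{y\in C_k}B_y$ produces a valid path decomposition of $G$: for each $v\in V(G)$, the set $\{k:v\in D_k\}$ coincides with $\{k:\rho(v)\in C_k\}$, which is contiguous because $T[v]$ is connected and $C_k$ is an ancestor chain, so the shallowest node $\rho(v)$ of $T[v]$ lies in $C_k$ exactly while the DFS is in $\rho(v)$'s subtree.

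For the layering, the natural candidate $\ell(v):=d(\rho(v))$ is not automatically a layering: for an edge $uv\in E(G)$ with $u,v\in B_x$, the nodes $\rho(u),\rho(v)$ are both ancestors of $x$ (hence comparable in the tree order), but their depths can differ arbitrarily. To repair this, I would use the BFS distance from $r$ in the incidence graph $G^+$ obtained from $G$ by adjoining $V(T)$ with the edges of $T$ and the incidences $\{xv:v\in B_x\}$. Setting $\ell(v):=d_{G^+}(r,v)$ yields a valid layering of $G^+\supseteq G$ satisfying $\ell(v)\le d(\rho(v))+1$, since a path $r\to\cdots\to\rho(v)\to v$ through $T$ and one incidence edge exists.

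The main obstacle is bounding $|D_k\cap V_j|\le w(p+1)(w+1)$. A vertex $v$ in this intersection has $\rho(v)\in C_k$ and $\ell(v)=j$, which yields $d(\rho(v))\ge j-1$. In the ``shortcut-free'' case where $\ell(v)=d(\rho(v))+1$, the constraint pins $\rho(v)$ to be the unique ancestor of $x_k$ at depth $j-1$, giving a count of at most $|B_{\rho(v)}|\le w+1$. In general, $G$-shortcuts can allow vertices with deeper $\rho(v)$ to fall into layer $j$; I expect to handle these via the pathwidth-$p$ property of each $T[v]$, which controls how many vertices can have their subtrees simultaneously meet the ancestor chain $C_k$ within a given depth window. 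The product bound $w(p+1)(w+1)$ arises from three contributions: the $(w+1)$ from the bag width in $T$, the $(p+1)$ from the path-decomposition width of each $T[v]$, and an additional factor of $w$ from the overlap count between ancestor windows under shortcuts. The delicate step in the argument is isolating the effect of the pathwidth bound on $T[v]$ and converting it into a bound on the number of contributing vertices to each $(D_k,V_j)$ pair.
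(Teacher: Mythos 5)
Your DFS-linearization approach breaks down, and the gap is not in the "delicate step" you flag but earlier: the ancestor-chain path decomposition $(D_k)$ already has bags whose intersection with a single layer can be unbounded, and the pathwidth-$p$ hypothesis does not repair this. Concretely, let $T$ be a path $y_0,\dots,y_n$ and let $G$ be the star with centre $u$ and leaves $v_0,\dots,v_n$, with bags $B_{y_i}=\{u,v_i\}$. This is a $(1,1)$-good tree decomposition: each $T[v_i]$ is a single node (pathwidth $0$) and $T[u]$ is a path (pathwidth $1$). Rooting at $y_0$, the unique DFS order gives $C_n=\{y_0,\dots,y_n\}$ and $D_n=\{u,v_0,\dots,v_n\}$. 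In your incidence graph $G^+$, $u$ is adjacent to $y_0$, so $\ell(u)=1$ and $\ell(v_i)=2$ for all $i\ge 1$ via the shortcut $y_0\to u\to v_i$. Hence $D_n\cap V_2=\{v_1,\dots,v_n\}$, which is unbounded, even though every $T[v_i]$ has pathwidth $0$. The reason the pathwidth hypothesis cannot save you is that the shortcuts are created by a \emph{single} vertex $u$ whose subtree $T[u]$ is a long path (pathwidth $1$, perfectly legal); pathwidth of $T[v]$ bounds branching, not length, and it says nothing about how many distinct vertices $v$ have $\rho(v)$ lying on a fixed ancestor chain.

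The paper's proof does not linearize $T$ at all. It first adds edges to make each bag a clique (so $G$ becomes chordal), then takes a \emph{bfs layering of $G$ itself}, not of an auxiliary incidence graph. The path decomposition of $G$ is built inductively layer by layer: for each component $H$ of a layer $V_i$, the set $C_H$ of its neighbours in $V_{i-1}$ is a clique of size at most $w$, the union $T_H=\bigcup_{u\in C_H}T[u]$ is a connected subtree with $\pw(T_H)\le w(p+1)-1$ by the subtree-union lemma, and restricting the tree decomposition to $T_H$ yields a tree decomposition of $\hat H=G[V(H)\cup C_H]$ of width at most $w$, whence $\pw(\hat H)\le(w(p+1)+1)(w+1)-1$ by the blow-up lemma. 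These per-component path decompositions are then spliced into the bag of the previous layer's decomposition that contains $C_H$. So the pathwidth-$p$ hypothesis enters through the pathwidth of the \emph{indexing} tree $T_H$ (via \cref{SubtreeUnion} and \cref{BlowUp}), not as a bound on how many subtrees $T[v]$ can simultaneously meet a chain. If you want to salvage your line of attack, you would essentially need to abandon the global DFS ordering of $T$ and switch to a layer-driven construction of the path decomposition.
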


\begin{proof} Let  $\mathcal{T}=(B_x:x\in V(T))$ be a tree decomposition of $G$ with width $w$, such that $\pw(T[v])\leq p$  for each vertex $v$ of $G$. Since adding edges does not decrease the layered pathwidth, we may add edges to $G$ between two non-adjacent vertices in the same bag of $\mathcal{T}$. Now each bag is a clique, and $G$ is chordal with maximum clique size $w+1$. Let $(V_0,V_1,\dots,V_t)$ be a bfs layering in $G$. That is, $V_i$ is the set of vertices in $G$ at distance $i$ from some fixed vertex $r$ of $G$. In particular, $V_0=\{r\}$.

Consider a component $H$ of $G[V_i]$ for some $i\geq 1$. Let $C_H$ be the set of vertices in $V_{i-1}$ adjacent to at least one vertex in $H$. Since $G$ is chordal, $C_H$ is a clique of size at most $w$ (see \citep{KP-DM08,DMW05}), called the \emph{parent clique} of $H$. Define $T_H:=\bigcup_{u\in C_H}T[u]$. Since $C_H$ is a clique, which is contained in a single bag of $\mathcal{T}$, there is a node $x$ of $T$ such that $x\in T[u]$ for each $u\in C_H$. Thus $T_H$ is a (connected) subtree of $T$. Moreover, $T_H$  is the union of at most $w$ subtrees, each with pathwidth at most $p$. Thus $\pw(T_H)+1\leq w(p+1)$ by \cref{SubtreeUnion}. Let $\hat{H}:= G[V(H)\cup C_H]$. 

We now prove that $\mathcal{T}_H := (B_x\cap V(\hat{H}) : x \in V(T_H))$ is a tree decomposition of $\hat{H}$. We first prove condition (ii). For a vertex $v$ of $C_H$, the set of bags of $\mathcal{T}_H$ that contain $v$ is precisely those indexed by nodes in $T[v]$, which is non-empty and connected, by assumption. Now, consider a vertex $v$ in $H$. Let $w$ be the neighbour of $v$ on a shortest $vr$-path in $G$. Thus $w$ is in $C_H$. Since $vw$ is an edge, $v$ and $w$ appear in a common bag of $\mathcal{T}$, which corresponds to a node in $T_H$ (since that bag contains $w$). Hence $T_H[v]$ is non-empty. 
We now prove that  $T_H[v]$ is connected. Let $B_1$ and $B_2$ be distinct bags of $\mathcal{T}_H$ containing $v$. Let $P$ be the $B_1B_2$-path in $T$. Since $T[v]$ is connected, $v$ is in the bag associated with each node in $P$. To conclude that $T_H[v]$ is connected, it remains to prove that $P\subseteq T_H$. By construction, some vertex $w_1$ is in $B_1\cap C_H$ and some vertex $w_2$ is in $B_2\cap C_H$. Since $w_1$ and $w_2$ are adjacent, the bag associated with each node in $P$ contains $w_1$ or $w_2$. Hence $P\subseteq T_H$ and $T_H[v]$ is connected. This proves condition (ii). Now we prove condition (i). Since $C_H$ is contained in some bag of $\mathcal{T}_H$, condition (i) holds for each edge with endpoints in $C_H$. For each edge $vw$ with $v\in V(H)$ and $w\in C_H$,  $v$ and $w$ are in a common bag $B_x$ of $\mathcal{T}$, implying $x$ is in $T_H$ (since $B_x$ contains $w$), as desired. Finally, consider an edge $uv$ with $u,v\in V(H)$. Suppose on the contrary that $u$ and $v$ have no common neighbour in $C_H$. By construction,  $u$ has a neighbour $w_1$ in $C_H$, and $v$ has a neighbour $w_2$ in $C_H$. Thus $w_1\neq w_2$. Since $C_H$ is a clique, $w_1$ and $w_2$ are adjacent. Since $uw_2\not\in E(G)$ and $vw_1\not\in E(G)$, the 4-cycle $(u,w_1,w_2,v)$ is chordless, and $G$ is not chordal, which is a contradiction. Hence $u$ and $v$ have a common neighbour $w$ in $C_H$. Thus $\{u,v,w\}$ is a triangle in $G$, which is in a common bag of $T$, and therefore in a common bag of $\mathcal{T}_H$, implying that $u$ and $v$ are in a common bag of $\mathcal{T}_H$. This proves condition (i) in the definition of tree decomposition. Therefore $\mathcal{T}_H$ is a tree decomposition of $\hat{H}$. By construction, it has width at most $w$. 

Since $\pw(T_H)+1\leq w(p+1)$ and $T_H$ indexes a tree decomposition of $\hat{H}$ with width at most $w$, by \cref{BlowUp}, $\pw(\hat{H}) \leq (w(p+1)+1)(w+1)-1$.  

We now construct a path decomposition of $G$ with layered width at most $w(p+1)(w+1)$ with respect to layering $(V_0,V_1,\dots,V_t)$. Let $G_i:=G[V_0\cup V_1\cup\dots\cup V_i]$. We now prove, by induction on $i$, that $G_i$ has a path decomposition with layered width at most $w(p+1)(w+1)$ with respect to layering $(V_0,V_1,\dots,V_i)$. This claim is trivial for $i=0$. Now assume that $(B_1,\dots,B_q)$ is a path decomposition of $G_{i-1}$ with layered width at most $w(p+1)(w+1)$ with respect to layering $(V_0,V_1,\dots,V_{i-1})$. For each component $H$ of $G[V_i]$, there is a bag $B_j$ that contains $C_H$; pick one such bag and call it the \emph{parent bag} of $H$. By doubling the bags, we may assume that distinct components of $G[V_i]$ have distinct parent bags. Now, for each component $H$ of $G[V_i]$ with parent bag $B_j$, if $(D_1,\dots,D_s)$ is a path decomposition of $\hat{H}$ with width  $w(p+1)(w+1)-1$, then replace $B_j$ by $(B_j\cup D_1,\dots,B_j\cup D_s)$. Doing this for each component of $G[V_i]$ produces a path decomposition of $G_i$  with layered width at most $w(p+1)(w+1)$ with respect to layering $(V_0,V_1,\dots,V_i)$. In particular, we obtain a path decomposition of $G$  with layered width at most $w(p+1)(w+1)$ with respect to layering $(V_0,V_1,\dots,V_t)$.
 \end{proof}

\section{Proof that (1) implies (5)}
\label{hard-section}

The goal of this section is to show that if a graph $G$ excludes some apex-forest graph $H$ as a minor, then $G$ has a $(w,p)$-good tree decomposition for some $w=w(H)$ and $p=p(H)$. Since every apex-forest graph is a minor of some $Q_k$, it suffices to prove this result for $H=Q_k$, in which case we denote $w=w(k)$ and $p=p(k)$.



We will be working with two related trees $S$ and $T$ and one graph $G$. To help the reader keep track of things we use variables $a$, $b$, and $c$ as names for \emph{nodes} of $S$ and $T$ and variables $v$, $x$, $y$, and $z$ to refer to \emph{vertices} of $G$.

We now give an outline of the proof.  First, we show that a recent result by  \citet{Dang18}  implies that every 3-connected graph $G$ with no $Q_k$ minor has pathwidth at most $w=w(k)$.  Thus, in this case, $G$ has a $(w,1)$-good tree decomposition.  Next we deal with cut vertices by showing that if each block of a graph $G$ has a $(w,p)$-good tree decomposition, then $G$ has a $(w,p+1)$-good tree decomposition.

Therefore, the main difficulty is to show that every 2-connected graph $G$ with no $Q_k$ minor has a $(w,p)$-good tree decomposition $(B_a:a\in V(T))$.  By the result of \citet{BRST-JCTB91} described in the introduction, if $\pw(T[v])> 2^{h+1}-3$ for some $v\in V(G)$ then $T[v]$ contains a $T_h$ minor.  For sufficiently large $h$, we then construct a $Q_k$ minor (from the $T_h$ minor in $T[v]$) in which $v$ plays the role of the apex vertex.

To construct the tree decomposition $(B_a:a\in V(T))$ we use two tools:
An SPQR-tree, $S$, represents a graph $G$ as a collection of subgraphs (S- and
R-nodes) that are joined at 2-vertex cutsets (P-nodes).  These subgraphs
consist of cycles (S-nodes) and 3-connected graphs (R-nodes). Cycles have
pathwidth 2 and, by the result of Dang discussed above, the 3-connected
graphs have pathwidth at most $w=w(k)$. Replacing the S- and R-nodes of the
SPQR-tree with these path decompositions produces the tree $T$ in our
tree decomposition.

To show that this tree decomposition is $(w,p)$-good, we first show that
if $T[v]$ contains a subdivision of a sufficiently large complete binary
tree, then the SPQR-tree $S$ also contains a subdivision of a large
complete binary tree all of whose nodes have subgraphs that contain $v$.
Using this large binary tree in $S$ we then piece together a subgraph of $G$
that has a $Q_k$ minor in which $v$ is the apex vertex.

\subsection{Dang's Result}

First we show how the following result of \citet{Dang18} implies that every 3-connected graph with no $Q_k$ minor has pathwidth at most $w=w(k)$.

\begin{theorem}[\mbox{\citet[Theorem 1.1.5]{Dang18}}]\label{dang-theorem}
  Let $P$ be a graph with two distinct vertices $u_1$ and $u_2$ such
  that $P-\{u_1,u_2\}$ is a forest, $Q$ be a graph with a vertex $v$
  such that $Q-v$ is outerplanar, and $R$ be a
  tree with a cycle going through its leaves in order from the leftmost
  leaf to the rightmost leaf so that $R$ is planar.  Then there exists
  a number $w = w(P, Q , R)$ such that every 3-connected graph of
  pathwidth at least $w$ has a $P$, $Q$, or $R$ minor.
\end{theorem}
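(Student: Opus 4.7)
The plan is to prove the contrapositive: a 3-connected graph $G$ excluding all three of $P$, $Q$, $R$ as minors has bounded pathwidth. The natural starting point is the quantitative version of \cref{PathwidthCharacterisation} due to \citet{BRST-JCTB91}: if $\pw(G)$ is large enough as a function of $h$, then $G$ contains the complete binary tree $T_h$ as a minor, and in fact (since $G$ is 3-connected) one can arrange a \emph{subdivision} of $T_h$ whose branch vertices are vertices of $G$. So after choosing $h$ large in terms of $|V(P)|+|V(Q)|+|V(R)|$, we may assume $G$ contains such a subdivision, call it $\hat{T}_h$.

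Next, 3-connectedness is leveraged via Menger's theorem: between any two branch vertices of $\hat{T}_h$ there are three internally disjoint paths in $G$, producing an abundance of ``extra paths'' layered on top of the $\hat{T}_h$-skeleton. Classifying how these extra paths attach to $\hat{T}_h$ is the heart of the argument. Applying Ramsey's theorem to the $2^h$ root-to-leaf branches of $T_h$ (and then to pairs of branches), one restricts to a large sub-binary-tree $T_{h'}$ on which the attachment behaviour of the extra paths is uniform. There are essentially three resulting cases, corresponding to the three obstructions.

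If the extra paths consistently anchor at a common pair of vertices $u_1,u_2$, contracting the skeleton and deleting unused edges yields a minor that is a forest with two extra apex vertices, which contains $P$. If they consistently anchor at a single vertex $v$, we obtain an apex graph whose non-apex part is outerplanar, and which therefore contains $Q$. Otherwise, the extra paths must link the leaves of $T_{h'}$ in a planar cyclic order along the boundary of the embedding, yielding a planar tree-plus-leaf-cycle containing $R$. The main obstacle is this last case: when neither $P$ nor $Q$ appears, one must show that the cyclic order in which the extra paths meet the leaves is actually compatible with a planar embedding of $T_{h'}$, so that a Halin-type $R$-minor can be realised. This is the delicate planarity step, and it is where Dang's work concentrates: it requires further Ramsey-type reductions to eliminate bad crossing patterns, together with planarity-preserving rerouting of the extra paths through the tree to line them up with the leaf order.
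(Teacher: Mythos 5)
This theorem is not proved in the paper at all: it is quoted verbatim from Thanh~N.\ Dang's 2018 PhD thesis (Theorem~1.1.5 there) and invoked as a black box in the proof of \cref{3-connected}. There is therefore no ``paper's own proof'' against which to compare your attempt, and a blind proof of this particular statement is addressing the wrong target --- the paper's actual contribution begins downstream of it.

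Evaluated on its own terms, the general shape of your sketch (contrapositive; BRST to extract a $T_h$ minor, hence a $T_h$ subdivision since $T_h$ has maximum degree $3$ --- a step which, contrary to your parenthetical, does not require 3-connectivity; Menger paths to exploit 3-connectivity; Ramsey to regularize the attachment pattern; a trichotomy matching $P$, $Q$, $R$) is a reasonable guess at the spirit of the argument. But the decisive steps are left at the level of slogans: it is not established that the attachment behaviour of the extra paths can be Ramsey-reduced to exactly your three patterns, and the third case --- showing that when neither $P$ nor $Q$ appears the leaf connections line up with a planar embedding to give a Halin-type $R$ minor --- is precisely the hard content, which you flag but do not supply. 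Since Dang's theorem is the principal result of an entire thesis, the amount of technical work being compressed into ``further Ramsey-type reductions \ldots together with planarity-preserving rerouting'' is very large, and the proposal should be regarded as an outline of a possible strategy rather than a proof.
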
 

Note that $R$ is a Halin graph, except that degree-2 vertices are allowed in the tree. 


To use \cref{dang-theorem} we need a small helper lemma.  For every $k\ge 0$, let $T_k^+$ be the graph obtained from the complete binary tree $T_k$ of height $k$ by adding a new vertex adjacent to the leaves. The next lemma is well known. 

\begin{lemma} 
\label{FindQh}
For every integer $k\ge 0$, $T_{2k}^+$ contains $Q_k$ as a minor.
\end{lemma}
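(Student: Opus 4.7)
The plan is to prove the lemma by induction on $k$, strengthening the statement as follows: for every $k \ge 0$, $T_{2k}^+$ contains $Q_k$ as a minor in which the branch set of the root of $T_k$ contains the root of $T_{2k}$, and the branch set of the apex contains the apex vertex of $T_{2k}^+$. The base case $k=0$ is immediate, since $T_0^+ = Q_0 = K_2$ and the single vertex of $T_0$ is simultaneously the root and the unique leaf.

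For the inductive step, I would work in $T_{2k+2}^+$. Let $r$ be the root of $T_{2k+2}$, let $L,R$ be its children and $LL, LR, RL, RR$ its grandchildren. The subtrees of $T_{2k+2}$ rooted at $LL$ and $RR$ are each isomorphic to $T_{2k}$, and each of them together with the apex $v^*$ (which is adjacent to their leaves) forms a copy of $T_{2k}^+$. Apply the inductive hypothesis separately to these two copies, obtaining two $Q_k$-minors whose root branch sets contain $LL$ and $RR$ respectively, and whose apex branch sets both contain $v^*$.

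I would then assemble a $Q_{k+1}$-minor in $T_{2k+2}^+$ as follows. The two $Q_k$-minors are vertex-disjoint outside of $v^*$, so the two $T_k$-minors are entirely disjoint, and their apex branch sets glue together along $v^*$ into one connected set. Use these two $T_k$-minors as the two subtrees of the root of $T_{k+1}$, and take the new root branch set to be $\{r,L,R\}$, which is connected in $T_{2k+2}$ and adjacent to the two $T_k$-root branch sets via the edges $L$–$LL$ and $R$–$RR$. Finally, augment the (merged) apex branch set with the vertex $LR$ together with a path from $LR$ down to an arbitrary leaf of its subtree; this path lies entirely in the $LR$-subtree, which was untouched by both inductive calls, and its leaf is adjacent to $v^*$, keeping the apex branch set connected. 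The edge $L$–$LR$ then gives the required adjacency to the new root branch set, completing the $Q_{k+1}$-minor. One checks directly that the root branch set contains $r$ (the root of $T_{2k+2}$) and the apex branch set contains $v^*$, preserving the strengthened hypothesis.

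The only real subtlety is the last step: after merging, the apex branch set reaches only into the $LL$- and $RR$-subtrees plus $v^*$, and it has no direct connection to the top vertices $\{r,L,R\}$. Routing through one of the unused grandchildren ($LR$ or $RL$) is crucial, which is precisely why two full levels of branching above the inductive subtrees — rather than one — are needed; this is exactly what accounts for the factor of $2$ in the statement $T_{2k}^+ \succeq Q_k$.
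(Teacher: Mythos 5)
Your proof is correct and follows the same inductive strategy as the paper's: peel off two levels of the tree, recurse on the copies of $T_{2(k-1)}^+$ hanging from the outer grandchildren $LL$ and $RR$, and build the new root from the material in between. The paper's construction differs only in a detail of bookkeeping: instead of taking the root branch set to be just $\{r,L,R\}$ and then augmenting the apex branch set with a path through $LR$ to reach it, the paper contracts the entire middle — $\{r,L,R\}$ together with the full subtrees rooted at $LR$ and $RL$ — into the root branch set, which is then adjacent to $v^*$ through their leaves, so the apex branch set stays the single vertex $\{v^*\}$ at every level and no merging or augmentation is required.
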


\begin{proof}
  The statement is immediate for $k=0$. For $k\ge 1$, partition the
  edges of $T_{2k}^+$ into the tree $T_{2k}$ and the remaining edges,
  which form a star centered at some vertex $v$.  Let $a_1,a_2,a_3,a_4$
  be the grandchildren of the root of $T_{2k}$ ordered from left to right.
  Contract the entire subtree comprised of the subtree rooted at $a_2$,
  the subtree rooted at $a_3$, and the path from $a_2$ to $a_3$. Applying
  the same procedure recursively on the copy of $T_{2(k-1)}^+$ rooted at $a_1$ and
  the copy of $T_{2(k-1)}^+$ rooted at $a_4$ produces $Q_k$, as can be easily verified
  by induction.
\end{proof}

\begin{corollary}\label{3-connected}
  There exists a number $w=w(k)$ such that every 3-connected graph of
  pathwidth at least $w$ has a $Q_k$ minor.
\end{corollary}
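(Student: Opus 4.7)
The plan is a direct application of Dang's theorem (\cref{dang-theorem}). It suffices to exhibit graphs $P$, $Q$, $R$ of the three shapes demanded by that theorem, each of which contains $Q_k$ as a minor; then every 3-connected graph of pathwidth at least $w(P,Q,R)$ contains at least one of them as a minor, hence contains $Q_k$ as a minor.

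I would take $P := Q_k$ and $Q := Q_k$. For $P$, letting $u_1$ be the apex vertex of $Q_k$ and $u_2$ be any other vertex, $P-\{u_1,u_2\} = T_k - u_2$ is a tree minus a vertex, which is a forest, as required. For $Q$, letting $v$ be the apex, $Q - v = T_k$ is a tree and hence outerplanar. Both $P$ and $Q$ contain $Q_k$ as a minor trivially.

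For $R$, I would take the planar graph obtained from the complete binary tree $T_{2k+1}$ by adding a cycle through its leaves in left-to-right order as determined by the standard plane embedding of $T_{2k+1}$. This fits the template of \cref{dang-theorem}: $T_{2k+1}$ is a tree and the added cycle traverses its leaves in order, and the resulting graph is planar (the cycle can be drawn in the outer face). To see that $R$ has a $Q_k$ minor, contract all edges of the added cycle. This identifies the leaves of $T_{2k+1}$ into a single new vertex $v^\star$; since in $T_{2k+1}$ each leaf is adjacent only to its parent, $v^\star$ becomes adjacent precisely to the parents of the original leaves. These parents are exactly the leaves of the subtree $T_{2k}$ that remains after removing the leaves of $T_{2k+1}$. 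Thus the contracted graph is isomorphic to $T_{2k}^+$, which contains $Q_k$ as a minor by \cref{FindQh}.

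Combining these observations with \cref{dang-theorem} yields a constant $w = w(P,Q,R) = w(k)$ with the desired property. There is no serious obstacle here: the heavy lifting is done by \cref{dang-theorem,FindQh}, and the only point requiring any care is the choice of height $2k+1$ (rather than $2k$) for the tree in $R$, so that after contracting the leaf-cycle one obtains exactly $T_{2k}^+$ and can invoke \cref{FindQh}.
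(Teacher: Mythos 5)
Your proof is correct and follows the same strategy as the paper's: apply \cref{dang-theorem} to suitable graphs $P$, $Q$, $R$, and invoke \cref{FindQh} to handle the $R$ case, with $R$ chosen identically (a leaf cycle added to $T_{2k+1}$, contracting to $T_{2k}^+$). Your choices $P=Q=Q_k$ are slightly simpler than the paper's (which takes $P$ to be $T_k$ plus two dominant vertices and $Q$ to be an apex over the outerplanar triangulation $\nabla_k$), but they satisfy Dang's hypotheses just as well, so the simplification is harmless.
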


\begin{proof}
Let $P$ be obtained from the complete binary tree $T_k$ by adding two dominant vertices $u_1$ and $u_2$. Let  $Q$ be the graph obtained from the outerplanar graph $\nabla_{k}$, whose weak dual is a complete binary tree of height $k$, by adding a dominant vertex $v$.  Let $R$ be the graph obtained from $T_{2k +1}$ by adding a cycle on its leaves, so that $R$ is planar. 

Then $P$ contains $Q_k$ as a minor since $P-\{u_2\}$ is isomorphic
  to $Q_k$.  $Q$ also contains $Q_k$ as a minor because $\nabla_{k}$
  contains a complete binary tree of height $k$ as a subgraph. Finally,
  $R$ also contains a $Q_k$ minor: Contract the cycle, then we have
  a complete binary tree of height $2k$ plus an apex vertex linked
  to its leaves, which contains $Q_k$ as a minor by \cref{FindQh}.
  \cref{dang-theorem} implies that there exists $w=w(k)$ such that every
  3-connected graph with pathwidth at least $w$ contains at least one
  of $P$, $Q$, or $R$ as a minor and therefore contains a $Q_k$ minor.
\end{proof}

\subsection{Dealing with Cut Vertices}

A \emph{block} in a graph is either a maximal 2-connected subgraph, the subgraph  induced by the endpoints of a bridge edge, or the subgraph induced by an isolated vertex. 

\begin{lemma}\label{cut-vertices}
Let $G$ be a graph, such that each block of $G$ has a $(w,p)$-good tree decomposition. 
Then $G$ has a $(w,p+1)$-good tree decomposition.
\end{lemma}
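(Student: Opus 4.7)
My plan is to combine the given tree decompositions of the blocks by a construction that mirrors the block-cut tree of $G$. I may assume $G$ is connected (otherwise, apply the construction to each connected component and join the resulting trees by arbitrary edges, which is harmless since each vertex of $G$ lives in a single component). Let $B_1,\dots,B_m$ be the blocks of $G$ and let $C$ be the set of cut vertices of $G$. For each block $B_i$, I will use the given $(w,p)$-good tree decomposition $\mathcal{T}_i=(B^i_a:a\in V(T_i))$, and for each cut vertex $v$ and each block $B_i$ containing $v$, I pick some node $a^i_v\in V(T_i)$ with $v\in B^i_{a^i_v}$.

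\textbf{The construction.} I form a tree $T$ by taking the disjoint union of $T_1,\dots,T_m$, adding a new \emph{hub node} $c_v$ for each cut vertex $v\in C$, and adding the edge $c_v a^i_v$ for every pair $(B_i,v)$ with $v\in V(B_i)\cap C$. The bag at $c_v$ is $\{v\}$; all other bags are inherited from the $\mathcal{T}_i$. That $T$ is a tree follows from a standard counting argument: the cross-edges are in bijection with the edges of the block-cut tree of $G$, so the total edge count is $\sum_i(|V(T_i)|-1)+(m+|C|-1)=|V(T)|-1$, while connectivity follows from the connectivity of the block-cut tree. Property (i) of a tree decomposition holds since every edge of $G$ lies inside some block $B_i$ and is therefore covered by a bag of $\mathcal{T}_i$. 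For property (ii), a non-cut vertex $v$ belongs to a single block $B_i$ and $T[v]=T_i[v]$, which is connected by assumption; for a cut vertex $v$, the set $T[v]$ consists of $c_v$ together with $T_i[v]$ for every block $B_i$ containing $v$, and these pieces are glued to $c_v$ through the edges $c_v a^i_v$, yielding a connected subtree.

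\textbf{The main obstacle and how to overcome it.} The only non-routine step is bounding $\pw(T[v])$ for a cut vertex $v$, because $v$ may lie in arbitrarily many blocks. Attaching the subtrees $T_i[v]$ at distinct nodes of some chosen $T_{i_0}[v]$, or invoking \cref{SubtreeUnion} directly, yields a bound that grows with the number of blocks containing $v$, which is far too weak. The point of introducing the hub node $c_v$ is that all the subtrees $T_i[v]$ then hang off a single common node. Given path decompositions of each $T_i[v]$ of width at most $p$, I concatenate them into one long sequence of bags and insert $c_v$ into every bag; the result is a valid path decomposition of $T[v]$ of width at most $p+1$, since every edge $c_v a^i_v$ is covered (because $a^i_v$ lies in some bag of the corresponding path decomposition, to which $c_v$ has been added) and the node $c_v$ occupies a trivially connected range (all bags). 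The overall width is unaffected because hub bags have size 1, so the resulting decomposition is $(w,p+1)$-good.
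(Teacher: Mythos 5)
Your proposal is correct and takes essentially the same approach as the paper: introduce a new hub node with bag $\{v\}$ for each cut vertex $v$, attach each block's tree at a node whose bag contains $v$, and then for the pathwidth bound concatenate the path decompositions of the subtrees $T_i[v]$ and add the hub node to every bag. The only cosmetic differences are that you explicitly handle disconnected $G$ by an outer reduction and that you spell out the edge-counting argument for the underlying graph being a tree; neither changes the substance of the argument.
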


 
\begin{proof}
  Let $C_1,\ldots,C_r$ be the blocks of $G$. For each $i\in\{1,\dots,r\}$, let $T_i$ be the underlying tree in a $(w,p)$-good tree decompositions of $C_i$. 

  We create a tree decomposition of $G$ as follows: For each cut vertex or isolated vertex 
  $v$ in $G$,  introduce a new tree node $a_v$ with $B_{a_v}=\{v\}$.
  In each block $C_i$ that contains $v$, the tree decomposition $(B_a:a\in V(T_i))$ of $C_i$ has at least one node $a$ such that $v\in B_a$; make $a_v$ adjacent to exactly one such node for each $C_i$.  

  It is straightforward to verify that this defines a tree decomposition of $G$
  and we now argue this decomposition is $(w,p+1)$-good.
  The resulting tree decomposition of $G$
  has width at most $w$.  
  For each isolated vertex $v\in V(G)$, the subtree $T[v]$ consists of one node.   
  For each cut vertex $v\in V(G)$, the subtree $T[v]$ 
  is   composed of some number of subtrees, each adjacent to $a_v$ and each
  having a path decomposition of width at most $p$.  We obtain a
  path decomposition of $T[v]$ by concatenating the path decompositions
  of each subtree and adding $v$ to every bag of the resulting path
  decomposition. The resulting path decomposition of $T[v]$ has width
  at most $p+1$.
\end{proof}

\subsection{SPQR-Trees}

In this section, we quickly review SPQR-trees, a structural
decomposition of 2-connected graphs used previously to characterize
planar embeddings \cite{MacLane37}, to design efficient
algorithms for triconnected components
\cite{HT73}, and in efficient data structures for
incremental planarity testing \cite{dibattista.tamassia:on-line, dibattista.tamassia:on-line2}.



Let $G$ be a 2-connected graph. An \emph{SPQR-tree} $S$ of $G$ is a tree in which each node $a\in V(S)$ is associated with a minor $H_a$ of $G$.  For any S- or R-node $a$ of $S$, $H_a$ is a simple graph. If $a$ is a P-node, on the other hand, then $H_a$ is a \emph{dipole graph} having two vertices and at least two parallel edges.  In all cases, $H_a$ is a minor of $G$.  For a P-node $a$ in which $H_a$ contains vertices $x$ and $y$ and $t$ parallel edges, this means that $G$ contains $t$ internally disjoint paths from $x$ to $y$. For each node $a$ of $S$ each edge $xy\in E(H_a)$ is classified either as a \emph{virtual edge} or a \emph{real edge}. An SPQR-tree $S$ is defined recursively as follows (see \cref{spqr}):\footnote{This definition includes P-nodes consisting of only two virtual edges, which some works exclude because they are unnecessary. However, their inclusion simplifies some of our analysis.}

\begin{figure}
  \begin{center}
    \includegraphics[width=\textwidth]{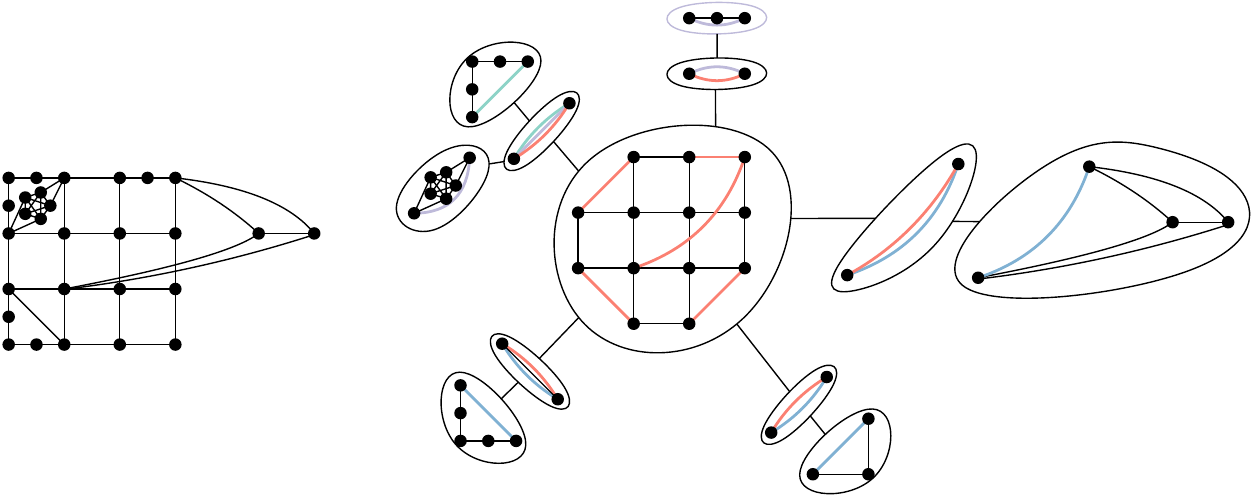}
  \end{center}
  \caption{A graph and its SPQR-tree.}
  \label{spqr}
\end{figure}
\begin{enumerate}
   \item If $G$ is a cycle, then $S$ consists of a single node $a$
     (an S-node) in which $H_a=G$ and all edges of $H_a$ are real.

   \item If $G$ is 3-connected, then $S$ consists of a single node $a$
     (an R-node) in which $H_a=G$ and all edges of $H_a$ are real.

   \item    Otherwise $G$ has a cutset $\{x,y\}$ such that 
     $x$ and $y$ each have degree at least 3.
     Then let $C_1,\ldots,C_r$, $r\ge 2$, be the connected components of
     $G-\{x,y\}$.  For each $i\in \{1,\ldots,r\}$, let $\tilde{G}_i$
     be $G[V(C_i)\cup\{x,y\}]$ along with the additional edge $xy$, if not
     already present. (The edge $xy$ is treated as a real edge so that the node $a_i$ mentioned below  exists.)\ Because of the inclusion of $xy$, each $\tilde{G}_i$  is
     2-connected, so each has an SPQR-tree $S_i$.  Then an SPQR-tree
     for $G$ is obtained by creating a node $a$ (a P-node) with $H_a$
     being a dipole graph with vertices $x$ and $y$ and having $r$
     virtual edges joining $x$ and $y$. In addition to these virtual edges, 
     $H_a$ contains the real edge $xy$ if $xy\in E(G)$.  The construction and the fact that $xy$ is an edge in each $\tilde{G}_i$ imply that, for each $i\in\{1,\ldots,r\}$,  there exists exactly one node $a_i$ in $S_i$ such that $xy$ is a real edge in $H_{a_i}$. To complete $S$, make $a$ adjacent to each of $a_1,\ldots,a_r$, and make $xy$ a virtual edge in each of $H_{a_1},\ldots,H_{a_r}$.

\end{enumerate}

Let $S$ be an SPQR-tree of a 2-connected graph $G$. For each node $a$ of $S$, let $E_r(H_a)$ denote the set of real edges in $H_a$ and let $E_v(H_a)$ denote the multiset of virtual edges in $H_a$.  For a connected subtree $S'$ of $S$, define $G[S']$ to be the subgraph of $G$ whose vertex set is $V(G[S'])=\bigcup_{a\in V(S')} V(H_a)$ and whose edge set is $E(G[S'])=\bigcup_{a\in V(S')} E_r(H_a)$. For a vertex $v\in V(G)$, let $S[v]:=S[\{a\in V(S): v\in V(H_a)\}]$, which is called the subtree of $S$ \emph{induced by} $v$. We make use of the following properties of $S$:

\begin{enumerate}
   \item Every R-node and S-node is adjacent only to P-nodes and no two
   P-nodes are adjacent.
   \item The degree of every node $a$ is equal to the number of virtual edges
     in $H_a$.
   \item For every vertex $v\in V(G)$, $S[v]$ is connected.
   \item If $a$ is an R-node or S-node, then $H_a$ is a simple graph; 
   that is, $H_a$ contains no parallel edges.
   \item If a P-node $a$ has degree 2 and both its neighbors are S-nodes 
      then $H_a$ has a real edge.
    \item For each node $a$ of $S$ and component $S'$ of $S-\{a\}$,
     $G[S']$ is connected.
   \item For each $xy\in E(G)$ there is exactly one node $a$ of $S$ 
    for which $xy$ is a real edge in $H_a$.
\end{enumerate}

\subsection{The Good Tree Decomposition}
\label{good-tree}

To obtain our good tree decomposition $(B_a:a\in V(T))$ of a 2-connected graph $G$ we start with an SPQR-tree $S$ for $G$.  For each R-node or S-node $a$ of $S$, let $(B_c: c\in V(P_a))$ be a minimum-width path decomposition of $H_a$.  
The tree $T$ includes the path $P_a$ for each R-node and S-node $a$ of $S$. We say that the node $a$ of $S$ \emph{generates} the nodes in the path $P_a$ and that each node in $P_a$ is \emph{generated by} $a$. Each S- or R-node $a$ is adjacent to some set of P-nodes in $S$. For each such P-node $b$ whose dipole graph $H_b$ has vertices $x$ and $y$, the edge $xy$ is a (virtual) edge in $H_a$ and therefore $x$ and $y$ appear in some common bag $B_c$ with $c\in V(P_a)$. Add $c$ and $b$ as vertices in $T$ and add the edge $bc$ to $T$.  Add $b$ as a node of $T$ and add the edge $bc$ to $T$. This defines the tree $T$ in the tree decomposition.

We now describe the contents of $T$'s bags.  
Each P-node $a$ of $S$ becomes a node in $T$ whose bag contains
only the two vertices of $H_a$.  Every node $a$ in $T$ that is generated
by an S- or R-node $a'$ of $S$ is a node in some path decomposition of
$H_{a'}$ and already has an associated bag $B_{a}$
that it inherits from this path decomposition.

It is straightforward to verify that $(B_a:a\in V(T))$ is indeed a tree decomposition of $G$: For each vertex $v\in V(G)$, the connectivity of the subtree $T[v]$ follows from Property~3 of SPQR-trees and the equivalent property for the path decompositions that include $v$. Each edge $xy$ of $G$ appears as an edge in $H_a$ for at least one node $a$ of $S$ and therefore $x$ and $y$ appear in a common bag in the path decomposition of $H_a$.

Each bag $B_a$ of $(B_a:a\in V(T))$ either has size in $\{2,3\}$
(when $a$ is generated by a P-node or an S-node) or it has size at
most $w(k)+1$ where $w(k)$ is the function in \cref{3-connected} (when
$a$ is generated by an R-node).  Thus, $(B_a:a\in V(T))$ is a tree
decomposition of $G$ whose width is upper bounded by a function of $k$.
It remains to show that, for every $v\in V(G)$, $T[v]$ has pathwidth
that is upper bounded by a function of $k$.

In the remainder of this section, we fix $G$ to be a 2-connected graph,
$S$ to be an SPQR-tree of $G$, and $(B_a:a\in V(T))$ to be a tree
decomposition of $G$ obtained using the procedure described above.  

\begin{lemma}
  \label{roof-lifting}
  For every integer $h\ge 1$, if $T[v]$ has pathwidth greater than $2^{2h+1}-3$, then $S[v]$ contains
  a subdivision of $T_h$.
\end{lemma}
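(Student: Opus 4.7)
The plan is to apply the quantitative pathwidth theorem of \citet{BRST-JCTB91} to $T[v]$ to find a large subdivided binary tree, then project this structure down to $S[v]$. Since $T[v]$ is a tree (being a subtree of $T$) with pathwidth greater than $2^{2h+1}-3 = |V(T_{2h})|-2$, the BRST theorem provides $T_{2h}$ as a minor of $T[v]$; because every minor of a tree is a topological minor (each branch-bag is itself a subtree and can contribute a single distinguished vertex), $T[v]$ contains a subdivision $M$ of $T_{2h}$.

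Next, I would define the natural projection $\phi : V(T[v]) \to V(S[v])$ that sends each vertex $c \in V(P_a) \cap V(T[v])$ (for any S- or R-node $a$) to $a$, and fixes every P-node. This $\phi$ contracts each subpath $P_a \cap V(T[v])$ to a single vertex; indeed $T[v]$ is essentially obtained from $S[v]$ by ``blowing up'' each S/R-node $a$ into a path and re-attaching $a$'s P-neighbors to designated nodes on that path. To extract a $T_h$ subdivision in $S[v]$, I would select a $T_h$-sub-skeleton of $M$ using only the branching vertices at even depths $0, 2, 4, \ldots, 2h$ of $T_{2h}$, choosing two of four available grandchildren at each level, so that every $T_h$-edge corresponds to a length-$2$ path in $T_{2h}$ with exactly one intermediate (odd-depth) branching vertex $m_{u'}$.

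The main obstacle is to ensure that the images of the selected branching vertices under $\phi$ are distinct in $S[v]$ and the projected paths are internally disjoint. Two consecutive selected branching vertices $m_u$ and $m_{u''}$ collapse under $\phi$ only if both lie on a common subpath $P_a \cap V(T[v])$; in that case, since $T[v]$ is a tree, the unique $M$-path between them (which passes through $m_{u'}$) must also lie on $P_a$. Then $m_{u'}$ has degree $3$ in $M$ with two incidences along $P_a$, forcing its third $M$-edge---the one going to $u'$'s sibling subtree in $T_{2h}$---to exit $P_a$ via a P-neighbor of $a$ in $S$. This P-neighbor supplies the branching needed at $a$ in $S[v]$. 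The factor-of-two loss in the exponent (from depth $2h$ in $T[v]$ to depth $h$ in $S[v]$) is precisely the slack that lets us absorb every such potential collapse level-by-level and assemble a valid $T_h$ subdivision inside $S[v]$.
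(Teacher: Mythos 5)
Your overall strategy matches the paper's proof: invoke \citet{BRST-JCTB91} to extract a subdivision $M$ of $T_{2h}$ in $T[v]$, observe that the projection $\phi:V(T[v])\to V(S[v])$ has path fibers $P_a\cap V(T[v])$, and then thin $M$ by selecting one grandchild out of each pair so that the projection yields a $T_h$ subdivision in $S[v]$. Your argument that one of the two grandchildren of $m_{u'}$ can always be chosen to avoid collapsing onto the grandparent's label is a valid variant of the paper's observation that ``each label induces a path in $T'$,'' and the book-keeping with the P-node supplying the exit from $P_a$ is sound.

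The genuine gap is the last step, which you compress into the sentence about the ``factor-of-two loss'' being ``precisely the slack that lets us absorb every such potential collapse level-by-level.'' What you have actually argued is only that each selected grandchild has a label different from its \emph{immediate} selected grandparent. You still owe a proof that (i) the images of \emph{all} selected distinctive nodes are pairwise distinct (not merely consecutive ones along a root-to-leaf path), and (ii) the images of the connecting paths remain internally disjoint, so that $\phi$ really carries the thinned skeleton to a $T_h$ subdivision in $S[v]$. These do not follow from the local step alone without an extra argument. The paper closes this gap by structuring the selection as an induction: after picking the two depth-2 nodes $a_1,a_4$ with labels different from the root $a$, it establishes that the sets of labels appearing in $\hat{a}_1$ and in $\hat{a}_4$ are \emph{disjoint} (any shared label would induce a $T'$-path through $a$, $a'$, $a''$, $a_1$, $a_4$, contradicting the choice of $a_1$), and then recurses independently on the two sides. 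That disjointness claim is the missing ingredient that turns your local collision-avoidance into a global one; with it, the contraction of each fiber manifestly preserves a $T_h$ subdivision, since no two distinctive nodes are ever identified.
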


\begin{proof}
  In the following, a \emph{binary tree} is a tree rooted at a degree-2 node, such that every other node has degree in $\{1,2,3\}$. In a binary tree, the root and every degree 3 node is called a \emph{branching node}.  Every branching node and every leaf is a \emph{distinctive node}.  We use the convention that all binary  trees are ordered, possibly arbitrarily, so that we can distinguish between the left and right child of a branching node.  For a node $a$  in a binary tree $T$, we denote by $\subtree{a}$ the subtree rooted at $a$; that is, the subtree of $T$ induced by the set of nodes that have $a$ as an ancestor, including $a$ itself.

  Recall, from the result of \citet{BRST-JCTB91} discussed in the introduction,
  that if $T[v]$ has pathwidth greater than $2^{2h+1}-3$ then $T$
  contains a subdivision $T'$ of $T_{2h}$.  Note that $T'$ does not
  immediately imply the existence of $T_h$ in $S[v]$ since two or more
  distinctive nodes of $T'$ may have been generated by the same node of $S$.  
  Label each node of $T'$ with the node of $S$
  that generated it.  Recall that each node $a$ in $S$ generates a path
  in $T$. So a maximal subset of nodes of $T'$ with a common
  label induces a path in $T'$.

  We claim that $T'$ contains a subdivision $T''$ of $T_{h}$ such that no two distinctive nodes of $T''$ have the same label.  We establish this claim by induction on $h$: If $h=0$ then the claim is trivial.
  Otherwise, let $a$ be the root of $T'$ and let $a'$ and
  $a''$ be the highest branching nodes in the left and right subtrees
  of $\subtree{a}$, respectively.  Let $a_1$ and $a_2$ be the highest
  distinctive nodes in the left and right subtrees of $\subtree{a}'$,
  respectively, and let $a_3$ and $a_4$ be the highest distinctive nodes in
  the left and right subtrees of $\subtree{a}''$, respectively.  Since each
  label induces a path in $T'$, at least
  one of $\{a_1,a_2\}$, say $a_1$, and at least one of $\{a_3,a_4\}$, say $a_4$, does not have the same label as $a$.
  Furthermore, since $a_1$ and $a_4$ are separated by $a$, the set of labels
  of nodes in $\subtree{a}_1$ is disjoint from the set of labels of
  nodes in $\subtree{a}_4$.  Applying induction on $\subtree{a}_1$
  and $\subtree{a}_4$ yields two subdivisions of $T_{h-1}$ in which no two distinctive nodes have the same label.  Connecting these two subdivisions with the unique path from $a_1$ to $a_4$ yields the
  desired subdivision of $T_{h}$ in which no two distinctive nodes have the same label.

  Since no two distinctive nodes in $T''$ have the same label, 
  each distinctive node corresponds to a unique node of $S$.  Thus, contracting all nodes of
  $T''$ that share a common label yields a subtree $T'''$ of $S[v]$ that is a subdivision of $T_{h}$.
\end{proof}

Thus far we have established that if $T[v]$ has sufficiently high
pathwidth, then $S[v]$ contains a subdivision of a large complete binary
tree.

\begin{lemma}\label{qk-minor}
  If $S[v]$ contains a subdivision of $T_{7(k+1)}$ then $G$ contains a 
  $Q_{k}$ minor.
\end{lemma}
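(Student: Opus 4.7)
The plan is to build a $Q_k$ minor in $G$ with $v$ playing the role of the apex vertex. First, I would refine the given subdivision of $T_{7(k+1)}$ in $S[v]$ into a subdivision of a smaller complete binary tree (still deep enough for the recursion below) in which every branching node is a P-node of $S$. Since every edge of an SPQR-tree joins a P-node to an S- or R-node, the two types alternate along any path in $S$, so any branching that happens to occur at an S- or R-node can be ``pushed'' a constant distance to a neighboring P-node, losing only a constant factor of depth. Every such P-node lies in $S[v]$ and therefore has $v$ as one of its two dipole vertices. The factor $7$ in the hypothesis provides the slack needed both for this rerouting and for the depth reduction in the recursion.

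Once this is set up, I would construct the $Q_k$ minor by induction on $k$. The base case $k = 0$ is immediate since $Q_0 \cong K_2$ and $G$ is $2$-connected. For the inductive step, let $b_0$ be the P-node at the root of the refined tree, with $V(H_{b_0}) = \{v, y_0\}$, and let $a_1, a_2$ be two of its neighbors lying in the two children subtrees. For $i \in \{1,2\}$, let $S_i$ be the component of $S - \{b_0\}$ containing $a_i$, and set $G_i := G[S_i]$. By the SPQR-tree construction, $\{v, y_0\}$ is a $2$-vertex cut of $G$; each $G_i$ is $2$-connected, contains both $v$ and $y_0$, and $V(G_1) \cap V(G_2) = \{v, y_0\}$. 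The refined subdivision restricted to $S_i$ still contains a subdivision of a complete binary tree of height at least $7k$ in $S[v] \cap S_i$, so by induction each $G_i$ contains a $Q_{k-1}$ minor with apex $v$; let $X_i$ denote the root branch set of its $T_{k-1}$.

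I would then assemble the $Q_k$ minor as follows. The apex branch set is $\{v\}$; the branch set for the root of the $T_k$ is built from $\{y_0\}$ together with, if necessary, short paths in $G_1 - v$ and $G_2 - v$ that make it adjacent to $X_1$ and $X_2$ and to a neighbor of $v$. The remaining branch sets are inherited from the two inductive $Q_{k-1}$ minors, with $X_1$ and $X_2$ serving as the roots of the two subtrees of the new $T_k$. Disjointness of the branch sets follows from $V(G_1) \cap V(G_2) = \{v, y_0\}$ together with the care taken in the inductive step to keep branch sets away from $\{v, y_0\}$. The apex-to-root edge of $Q_k$ is realized either by a real edge $vy_0$ in $H_{b_0}$ (if present) or by routing through a single neighbor of $v$ inside $G_1$ (the $2$-connectivity of $G_1$ guarantees this).

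The main obstacle is the careful bookkeeping required to keep all branch sets pairwise vertex-disjoint while ensuring simultaneously that each is adjacent to $v$, especially around the shared boundary $\{v, y_0\}$ of the $2$-cut at each recursive step. The depth $7(k+1)$ in the hypothesis is what gives just enough slack to absorb both the P-node refinement and the small path adjustments needed at every level of the recursion.
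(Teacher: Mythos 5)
Your approach differs from the paper's: you attempt a top-down recursive construction of the $Q_k$ minor, splitting at root P-nodes, whereas the paper constructs a subdivision $T''$ of $T_{k+1}$ in $G'$ directly and then attaches ``apex paths'' from $v$ to anchor vertices on the subdivision paths. There are, however, two genuine gaps in your argument.

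First, the P-node refinement does not work as described. A branching node of a subdivision is, by definition, a degree-3 node, and you cannot ``push'' that property onto a neighbour: if $a$ is an S- or R-node of degree 3 in the subdivision $T'$, its three $T'$-neighbours $b_1,b_2,b_3$ are P-nodes of degree 2 in $T'$, and there is no reason any of them has a third edge available that still leads into $S[v]$ and produces disjoint branches. In particular, using $b_1$ (the parent-side neighbour) as a branching node would force both child-branches to pass through $a$, which is not a subdivision. The paper avoids this altogether: it keeps the S/R branching node $a$ and uses the 2-connectivity of $H_a-\{v\}$ to connect the three attachment vertices $x,y,z$ by an edge-minimal tree inside $H_a-\{v\}$. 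Any fix to your approach will have to do something similar at S/R branching nodes rather than eliminating them.

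Second, even granting the refinement, the assembly step is not justified. You want a new root branch set $Y\ni y_0$ joined by ``short paths in $G_1-v$ and $G_2-v$'' to $X_1$, $X_2$, and a neighbour of $v$, while remaining disjoint from every branch set of the two inductively obtained $Q_{k-1}$ minors. Nothing in the inductive hypothesis controls where those branch sets sit relative to $y_0$, so such paths need not exist disjointly. (Strengthening the induction to pin $X_i$ near $y_0$ creates a conflict, since both $X_1$ and $X_2$ would then compete for $y_0$.) This disjointness problem is exactly what the paper's quantitative hypothesis buys: consecutive distinctive nodes of $T'$ are at distance at least $7$, and $T'[x]$ is a star for every $x\neq v$, which forces the apex path built inside $H_{c_2},\dots,H_{c_5}$ to avoid $H_b$. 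You also implicitly use a strengthened induction hypothesis (``$Q_{k-1}$ minor \emph{with apex $v$}''); this would need to be stated and proved, not just invoked.
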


\begin{proof}
  First we note that if $S[v]$ contains a subdivision of $T_{7(k+1)}$
  then $S[v]$ contains a subdivision $T'$ of $T_{k+1}$ such that the path
  between each pair of distinctive nodes in $T'$ has length at least 7.

  It is convenient to work with a simplified SPQR-tree $S'$ and graph $G'$
  obtained by repeating the following operation exhaustively: Consider
  some edge $ab$ of $S$ with $a\in V(T')$ and $b\not\in V(T')$. The edge
  $ab$ is associated with some virtual edge $xy$ in $H_a$.  In $S'$, 
  replace the virtual edge $xy$ in $H_a$ with a real edge. At the same
  time, remove the maximal subtree $\subtree{b}$ of $S$ that contains
  $b$ and not $a$. By Property~6 of SPQR trees, in $G'$ this operation
  is equivalent to contracting all the real edges in $\bigcup_{c\in
  V(\subtree{b})} E_r(H_c)$ and removing any resulting parallel edges.
  Since the resulting graph $G'$ is a minor of $G$, this operation is
  safe in the sense that the existence of a $Q_k$ minor in $G'$ implies
  the existence of a $Q_k$ minor in $G$.

  With this simplification, the tree $T'$ is an SPQR-tree for the graph $G'$ and every virtual edge is incident to $v$.  We now turn our efforts to finding the $Q_k$ minor in $G'$.  Recall that $Q_k$ is obtained from a complete binary tree $T_k$ by adding a dominant vertex. We begin by finding a subdivision $T''$ of $T_{k+1}$ in $G'$. In this subdivision, each edge of $T_{k+1}$ that joins a node to its left child is represented by a path $P_{\mu\nu}$ joining a branching node $\mu$ to a distinctive node $\nu$. We show that $G'$ contains a path from $v$ to some \emph{anchor node} $\eta$ of $P_{\mu\nu}$ with $\eta\neq\nu$, which is vertex disjoint from $T''$ except for $\eta$.  Furthermore, except for their common endpoint $v$, all of these paths are disjoint. The union of $T''$ and these paths contains a $Q_k$ minor since contracting the path from each anchor node to its closest ancestor branching node produces $Q_k$. See \cref{qk}.

  \begin{figure}
    \begin{center}
      \includegraphics{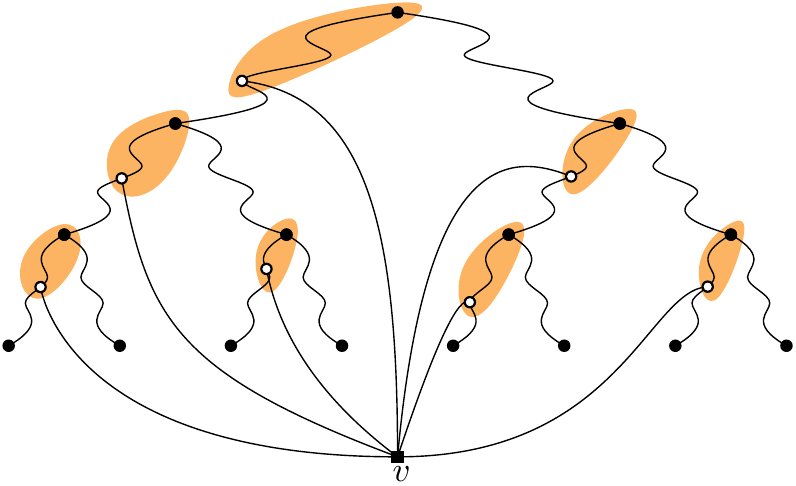}
    \end{center}
    \caption{Finding a $Q_k$ minor in the proof of \cref{qk-minor}. Distinctive nodes are indicated by black disks and anchor nodes by white circles.}
    \label{qk}
  \end{figure}

  Let $a$ be a branching node of $T'$ and let $b$ be the nearest
  distinctive node in one of $a$'s two subtrees.  Consider the path
  $a=c_1,c_2,\ldots,c_r=b$ in $T'$.  For each $i\in\{1,\ldots,r-1\}$,
  the edge $c_ic_{i+1}$ is associated with a cutset $\{v,x_i\}$ in
  $G'$ and $vx_i$ is a virtual edge in $H_{c_i}$ and $H_{c_{i+1}}$.
  Note that this implies that, for each $i\in\{2,\ldots,r\}$, $H_{c_i}$
  contains both vertices $x_i$ and $x_{i-1}$.

  We claim that, for each $i\in\{2,\ldots,r-1\}$,
  $H_{c_{i}}$ contains a path $P_i$ from $x_{i-1}$ to $x_{i}$ that
  does not contain $v$; refer to \cref{paths}.  When $c_i$ is a P-node, this claim is trivial
  since, in this case, $x_{i-1}=x_{i}$.  The case in which $c_i$
  is an S-node or R-node is also easy: In these cases $H_{c_i}$ is
  2-connected, therefore there is a path from $x_{i-1}$ to $x_{i}$
  that avoids $v$.  Now note that the paths $P_1,\ldots,P_{r-1}$ are
  disjoint, except for each of the common endpoints $x_i$ where $P_{i}$ ends and $P_{i+1}$ begins.  This is because each $\{v,x_i\}$ is 
  a cutset of $G'$ that
  separates $\bigcup_{j=1}^{i} V(H_{c_j})\setminus\{v,x_i\}$ from
  $\bigcup_{j={i+1}}^r V(H_{c_j})\setminus\{v,x_i\}$.  By concatenating
  $P_2,\ldots,P_{r-1}$ we obtain a path $P_{ab}$ from $x_1\in V(H_a)$
  to $x_{r-1}\in V(H_b)$ that we call the \emph{subdivision path} for
  nodes $a$ and $b$.

  \begin{figure}[!t]
    \begin{center}
      \includegraphics{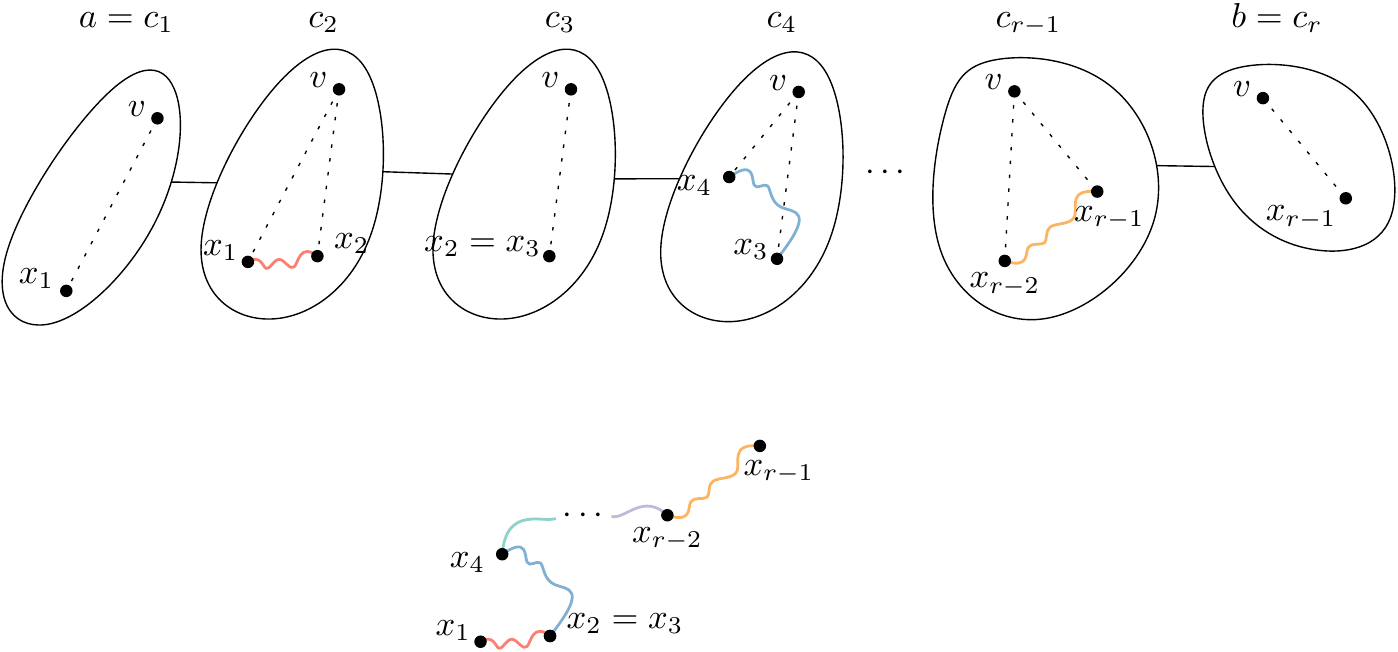}
    \end{center}
    \caption{Finding a path connecting a vertex of $H_a$ to a vertex of $H_b$.}
    \label{paths}
  \end{figure}


Consider any branching node $a$ of $T'$.  If $a$ is a P-node then all
the subdivision paths that begin or end at a vertex of $H_a$ include
the same vertex of $H_a$. If $a$ is an S- or R-node, each subdivision
path that begins or ends at a vertex of $H_a$ includes a different
vertex (for up to 3 different vertices $x$, $y$, and
$z$).\footnote{The only time $a$ may be an S-node is when $a$ is the
root of $T'$ in which case the two subdivision paths that begin at a
vertex in $H_a$ begin at the two neighbours $x$ and $y$ of $v$ in the
cycle $H_a$.} Now, since $H_a$ is 2-connected, these three vertices
are in the same component of $H_a - \{v\}$. In particular, $H_a -
\{v\}$ contains an edge-minimal tree that includes $x$, $y$, and $z$.
Adding each of these trees to the union of all subdivision paths
produces the subdivision $T''$ of $T_{k+1}$.


  Next we show how to construct paths from $v$ to anchor nodes.
  Let $a$ be a branching node of $T'$, let $b$ be the highest
  distinctive node in $a$'s left subtree and let $c_1=a,\ldots,c_r=b$
  be the path in $T'$ having endpoints $a$ and $b$.   
  Thus far we have established
  that $G'[\{c_2,\ldots,c_{r-1}\}]$ contains a simple path $P_{ab}$
  from $x_1$ to $x_{r-1}$ that does not include $v$.   Note that  $r$ is large enough so that $c_5$ exists.  We now show
  that $G'[\{c_2,c_3,c_4,c_5\}]$ contains an \emph{apex path} $P'$
  from $v$ to some \emph{anchor node} of $P_{ab}$ such that the
  internal vertices of $P'$ are disjoint from $V(P_{ab})\cup V(H_{b})$.
  We first describe the path $P'$ in $G'[\{c_2,c_3,c_4,c_5\}]$ and then
  show that $P'$ contains no vertex of $V(H_b)\setminus\{v\}$.
  There are two cases to consider:
  \begin{enumerate}
    \item $c_i$ is an R-node, for some $i\in\{2,\ldots,5\}$:  Since
    $H_{c_i}$ is 3-connected, there are three paths in $H_{c_i}$
    with endpoints $v$ and $x_i$ and no other vertices in common.
    Since $H_{c_{i}}$ has only two virtual edges, at least one of these
    paths uses only real edges in $H_{c_i}$.  This path therefore
    contains a subpath $P'$ joining $v$ to some vertex of $P_{ab}$
    (the anchor vertex) that is otherwise disjoint from $P_{ab}$.

%

\item Otherwise, none of $c_2,\ldots,c_5$ is an R-node.  Property~1 of
SPQR-trees implies that, for at least one $i\in\{2,3\}$, $c_i$ is an
S-node, $c_{i+1}$ is a P-node, and $c_{i+2}$ is an S-node.  In the
simplified SPQR-tree $S'$, $c_2$ and $c_3$ each have degree 2.
Therefore, Property~5 of SPQR-trees (applied to $S'$) ensures that
$H_{c_{i+1}}$ contains the real edge $vx_{i+1}$. This real edge in the
version of $H_{c_{i+1}}$ that appears in $S'$ corresponds either to a
real edge in the version of $H_{c_{i+1}}$ that appears in $S$ or it
was introduced in $S'$.  In the former case, $G$ contains the edge
$vx_{i+1}$ and we are done.  In the latter case $c_{i+1}$ is adjacent
in $S$ to a node $d$ that is not in $T'$ and the maximal subtree
$\hat{d}$ of $S$ that contains $d$ but not $c_{i+1}$ was removed from
$S$ while producing $S'$. Recall that this is equivalent to
contracting all the real edges $\bigcup_{c\in V(\hat{d})} E_r(H_c)$.
Thus, the real edge in the version of $H_{c_{i+1}}$ that appears in
$S'$ implies the existence of a path in $G$ from $v$ to $x_{i+1}$
whose internal vertices appear only in $\bigcup_{c\in V(\hat{d})}
V(H_c)$. The internal vertices of this path are disjoint from
$P_{ab}$.

  \end{enumerate}
  It remains to show that $P'$ does not contain any vertices of
  $V(H_b)\setminus \{v\}$.  By Properties~1 and 6 of SPQR-trees,
  for each $x\in V(G')\setminus\{v\}$, the subtree $T'[x]$ of $T'$
  consisting only of nodes $a$ such that $x\in V(H_a)$ is a star; that is,
  the distance between any two nodes in $T'[x]$ is at most 2.  Now,
  since the distance between any two distinctive nodes of $T'$ is at least
  $7$, we have $r\ge 8$ and therefore $H_b=H_{c_r}$ has no vertex, except $v$,
  in common with any of $H_{c_2},\ldots,H_{c_5}$.  Therefore, $P'$
  joins $v$ to a vertex in $P_{ab}$ that is not in $H_{b}$, as required.

  Adding the set of all apex paths to $T''$ then produces a subgraph of
  $G'$ that contains a $Q_k$-minor.
\end{proof}




Finally, we have all the pieces in place to complete the proof.

\begin{proof}[Proof that (1) implies (5)]
Let $G$ be a graph excluding some apex-forest graph $H$ as a minor. As explained earlier, $G$ contains no $Q_k$ minor for some $k=k(H)$.  We wish to show that there are $w$ and $p$ that depend only on $k$ such that $G$ has a $(w,p)$-good tree decomposition.  By \cref{cut-vertices} we may assume that $G$ is 2-connected.

Consider the good tree decomposition $(B_a:a\in V(T))$ of $G$ described in   \cref{good-tree}.    This decomposition has width at most $w$ where   $w=w(k)$ is the function that appears (implicitly) in \cref{dang-theorem} and \cref{3-connected}.   We claim that, for each $v\in V(G)$, $\pw(T[v])\le   2^{14(k+1)+1}-3$, so that this tree decomposition is   $(w,2^{14(k+1)+1}-3)$-good.  Otherwise, by \cref{roof-lifting}, there is a vertex $v\in V(G)$ such that an SPQR-tree $S$ has a subtree $S[v]$ that contains a subdivision of $T_{7(k+1)}$.  Therefore, by \cref{qk-minor}, $G$ contains a $Q_k$ minor, contradicting the supposition that $G$ has no $Q_k$ minor.
\end{proof}


Note that we have not tried to optimise constants in the above proof. For example, with more work the constant 14 can be reduced to less than 3. 



Finally, we address the computational complexity of our main theorem. 

\begin{theorem}
   There exists a function $f:\mathbb{N}\to\mathbb{N}$ and an algorithm
   that takes as input an $n$-vertex graph and outputs, in $O(f(k)n)$
   time, an $(f(k),f(k))$-good tree decomposition of $G$ and a layered
   path decomposition of $G$ of layered width at most $f(k)$, where $k$
   is largest integer such that $G$ contains a $Q_k$ minor.
\end{theorem}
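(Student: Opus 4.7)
The plan is to make constructive each step of the proofs that $(1)\Rightarrow(5)$ and $(5)\Rightarrow(4)$ already given in Sections~\ref{easy-section} and~\ref{hard-section}, relying on well-known linear-time algorithms for block decomposition, SPQR-tree construction, and bounded-width pathwidth computation.

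First, I would compute the block-cut tree of $G$ in $O(n)$ time via depth-first search and process each 2-connected block $B$ independently. For each such block, compute its SPQR-tree $S$ using the linear-time algorithm of Gutwenger and Mutzel. By \cref{3-connected}, each R-node skeleton $H_a$ is a 3-connected minor of $G$ and hence has pathwidth at most $w(k)$. To find these path decompositions without knowing $k$, I would run Bodlaender's linear-time pathwidth algorithm with a doubling bound $b=1,2,4,\ldots$ until it succeeds on every R-node skeleton; since the true bound is at most $w(k)$, this terminates with $b\leq 2w(k)$ and total cost $O(g(k)\cdot\sum_a |V(H_a)|)=O(g(k)n)$ for an appropriate function $g$, as the skeletons in an SPQR-tree have linear total size. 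For S- and P-nodes the path decompositions are trivial. Then I would assemble $(B_a:a\in V(T))$ exactly as described in Section~\ref{good-tree}, and combine per-block decompositions using the construction of \cref{cut-vertices}. By the analysis already given in the proof that $(1)\Rightarrow(5)$, this is a $(w(k),p(k))$-good tree decomposition of $G$, where $p(k)=2^{14(k+1)+1}-2$.

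To produce the layered path decomposition, I would make the proof of the lemma showing $(5)\Rightarrow(4)$ algorithmic. Compute a bfs layering $(V_0,\dots,V_t)$ of $G$ in $O(n)$ time. Process layers in order; for each component $H$ of $G[V_i]$, identify its parent clique $C_H\subseteq V_{i-1}$ from a single bag containing $C_H$ (chordalise $G$ implicitly via the tree decomposition). Assemble the subtree $T_H$ as the union of at most $w$ already-computed subtrees $T[u]$ for $u\in C_H$; combine their path decompositions using the iterative gluing of \cref{SubtreeUnion}, apply the linear-time construction of \cref{BlowUp} to get a path decomposition of $\hat{H}$, and concatenate these path decompositions along the bfs layering by the doubling-and-replacement procedure in the proof. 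Each component $H$ is processed in time $O(|V(\hat{H})|+|V(T_H)|)$, and standard amortisation over the layering gives linear total time.

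The main obstacle I anticipate is achieving truly linear dependence on $n$. Bodlaender's algorithm has very large constants as a function of width, but these are absorbed into $f(k)$. More delicate is the amortisation for the layered pathwidth conversion and for \cref{cut-vertices}: each vertex of $G$ appears in $O(1)$ of the assembled substructures, but care is needed when gluing path decompositions to avoid repeatedly traversing entire bags; an explicit linked-list representation of the path decompositions, with the doubling/replacement operations implemented by pointer splicing, resolves this. The algorithm never needs $k$ as input, since the doubling strategy adapts automatically to the largest $k$ for which $G$ contains a $Q_k$ minor.
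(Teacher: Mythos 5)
Your proposal is correct and follows essentially the same route as the paper's proof sketch: linear-time SPQR-tree construction (Gutwenger--Mutzel), bounded-width pathwidth computation on the skeletons (Bodlaender et al.), assembly into the good tree decomposition of Section~\ref{good-tree} combined with \cref{cut-vertices}, and then a constructive version of the lemma showing (5) implies (4). The paper additionally invokes Reed and Wood (2016) to bound $|E(G)|=O(f(k)n)$ so that the preliminary passes run in time linear in $n$ rather than $m$, while you make explicit the doubling strategy that removes any need to know $k$ in advance and the reduction to 2-connected blocks; these are small but reasonable elaborations of the same argument.
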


\begin{proof}[Proof Sketch]
   In the following, for each $i\in\{0,\ldots,5\}$, $f_i:\mathbb{N}\to\mathbb{N}$ is an unspecified function that is known to exist.  Since $G$ has no $Q_k$-minor, $|E(G)|\leq f_0(k)n$ (see \citep{ReedWood16}).  An SPQR-tree $S$ of $G$ can be computed in $O(f_0(k)n)$ time, and the total size of the graphs $\{H_a:a\in V(S)\}$ is at most $O(f_0(k)n)$ (see \cite{GM00}).  A path decomposition of $H_a$ with width at most 2 for each S-node or P-node $a$ is easily computed in time linear in the size of $H_a$.  For each R-node $a$, the pathwidth of $H_a$ is at most $p=p(k)$. Taking $p(k)$ to be part of $f_1(k)$, a minimum-width path decomposition of $H_a$ for an R-node $a$ can be computed in $O(f_1(k)|V(H_a)|)$ time \cite{BK91,K93,CDF96,Bodlaender-SJC96}. These path decompositions are all that is needed to construct the tree $T$ and an $(f_2(k),f_3(k))$-good tree decomposition $\{ B_a:a\in V(T)\}$ in $O(f_1(k)n)$ time.



   The proof that (5) implies (4) in \cref{easy-section} is constructive
   and immediately gives an $O(f_4(k)n)$ time algorithm to convert
   the $(f_2(k),f_3(k))$-good tree decomposition into a layered path
   decomposition of layered width at most $f_5(k)$. This establishes the
   theorem for $f(k) = \max\{f_i(k) : i\in\{0,\ldots,5\}\}$.
\end{proof}

\subsection*{Acknowledgements} Thanks to the anonymous referees for pointing out some missing details and for several other helpful comments. Some of this research took place at the 2018 Graphs@IMPA workshop in Rio de Janeiro, February 2018.

  \let\oldthebibliography=\thebibliography
  \let\endoldthebibliography=\endthebibliography
  \renewenvironment{thebibliography}[1]{%
    \begin{oldthebibliography}{#1}%
      \setlength{\parskip}{0.3ex}%
      \setlength{\itemsep}{0.3ex}%
  }{\end{oldthebibliography}}

\def\soft#1{\leavevmode\setbox0=\hbox{h}\dimen7=\ht0\advance \dimen7
  by-1ex\relax\if t#1\relax\rlap{\raise.6\dimen7
  \hbox{\kern.3ex\char'47}}#1\relax\else\if T#1\relax
  \rlap{\raise.5\dimen7\hbox{\kern1.3ex\char'47}}#1\relax \else\if
  d#1\relax\rlap{\raise.5\dimen7\hbox{\kern.9ex \char'47}}#1\relax\else\if
  D#1\relax\rlap{\raise.5\dimen7 \hbox{\kern1.4ex\char'47}}#1\relax\else\if
  l#1\relax \rlap{\raise.5\dimen7\hbox{\kern.4ex\char'47}}#1\relax \else\if
  L#1\relax\rlap{\raise.5\dimen7\hbox{\kern.7ex
  \char'47}}#1\relax\else\message{accent \string\soft \space #1 not
  defined!}#1\relax\fi\fi\fi\fi\fi\fi}

\end{document}